\documentclass[a4paper,11pt]{amsart}
\usepackage[pdftex]{color,graphicx}
\usepackage{amssymb}
\usepackage{amsfonts}
\usepackage{esint}
\usepackage{amsmath}
\usepackage{amsrefs}
\usepackage{epsfig}
\usepackage{amsthm}
\usepackage{color}
\usepackage[]{epsfig}
\usepackage[]{pstricks}
\usepackage[utf8]{inputenc}

\newtheorem{theorem}{Theorem}[section]

\newtheorem{lemma}{Lemma}[section]

\newtheorem{example}{Example}[section]
\newtheorem{corollary}{Corollary}[section]

\newtheorem{remark}{Remark}[section]

\newcommand{\s}{\mathbb{S}}
\newcommand{\R}{\mathbb{R}}

\newcommand{\Ric}{Ric}
\newcommand{\tr}{tr}
\newcommand{\Vol}{Vol}

\newcommand{\vol}{vol}
\newcommand{\ind}{ind}
\newcommand{\Ind}{Ind}

\setlength{\textwidth}{16cm} 
\setlength{\leftmargin}{1.2cm} \setlength{\rightmargin}{1.2cm}
\setlength{\oddsidemargin}{0cm}\setlength{\evensidemargin}{0cm}


\setlength{\voffset}{-50pt}
\numberwithin{equation}{section}
\title[Essential Spectrum of Weighted Laplacian]{Essential Spectrum of the Weighted Laplacian on Noncompact Manifolds and Applications}
\author{Adina Rocha}
\date{Received: May 04, 2016 / Accepted: July 21, 2016 }

\address{Instituto de Matem\'atica,
Universidade Federal de Alagoas,
Macei\'o, AL, 57072-900, Brazil}
\email{adina@pos.mat.ufal.br}


\begin{document}
\footnotetext{A. Rocha was partially supported by CAPES of Brazil}
\subjclass{53C21; 53C42; 58J50}

\begin{abstract}
We obtain upper estimates for the bottom (that is, greatest lower bound) of the essential spectrum of weighted Laplacian operator of a noncompact weighted manifold under assumptions of the volume growth of their geodesic balls and spheres. Furthermore, we find examples where the equality occurs in the estimates obtained. As a consequence, we give estimates for the weighted mean curvature of complete noncompact hypersurfaces into weighted manifolds.\\

{\bf Keywords:} Essential Spectrum; Weighted Laplacian; Volume growth; Estimate; Mean Curvature; Index.
\end{abstract}
\maketitle

\section{Introduction}
\label{intro}

The $L^{2}$ spectrum of the Laplacian operator, denoted by $\sigma(-\Delta)$, has been analyzed and computed for large classes of complete noncompact manifolds. For example, when that manifold has a soul whose exponential map is a diffeomorphism, supposing nonnegative sectional
curvature and some other additional conditions under dimension of manifold, Escobar \cite{Escobar1986} and Escobar and Freire \cite{EscobarFreire1992} proved that
$$\sigma(-\Delta)=\sigma_{ess}(-\Delta)=[0,+\infty),$$  where  $\sigma_{ess}(-\Delta)$ denotes the essential spectrum of Laplacian operator. The additional conditions under dimension of $M$
assumed by them were proved to be unnecessary by Zhou \cite{Zhou1994}. Now, by assuming that the complete noncompact manifold has nonnegative Ricci curvature and Euclidean volume growth, the essential spectrum of the Laplacian operator was proved to be $[0,+\infty)$ by Donnelly \cite{Donnelly1997}. It is classical that the essential spectrum of Laplacian operator on compact manifold is empty, and thus, its $L^2$ spectrum is discrete. 

Given that the $L^2$ spectrum of Laplacian operator for a large class of noncompact manifolds is essential, it becomes important to estimate the bottom (that is, the greatest lower bound) of essential spectrum of $-\Delta$ denoted by $\inf\sigma_{ess}(-\Delta)$. There exists many interesting results on the estimates for $\inf\sigma_{ess}(-\Delta)$, for instance, \cite{Cheng1975}, \cite{Pinsky1978}, \cite{Pinsky1979}, 
\cite{DonnellyLi1979}, 
\cite{Brooks1981}, \cite{Brooks1984}, \cite{Higuchi2001}, and \cite{LiWang2002}.

The problem of estimating the $\inf\sigma_{ess}(-\Delta)$ of complete noncompact manifold $M$ under simple geometric assumptions has been intensively studied in the past decades. For example, Donnelly proved in \cite{Donnelly1981} that $$\inf\sigma_{ess}(-\Delta)\leq(n-1)^2k/4$$ when the Ricci curvature is bounded from below by the constant $-(n-1)k,$ where $n=\dim M$ and $k\geq0.$ In \cite{Brooks1981} and \cite{Brooks1984}, Brooks generalized this Donnelly's estimate. He showed that if $M$ has infinite volume, then $$
\inf\sigma_{ess}(-\Delta)\leq \bar\mu_v^2/4,
$$ where \ $\bar\mu_v=\limsup_{r\rightarrow\infty}\frac{1}{r}\log\Vol(B_r)$; and if $M$ has finite volume, $$
\inf\sigma_{ess}(-\Delta)\leq \bar\mu_w^2/4,
$$ where $\bar\mu_w=\limsup_{r\rightarrow\infty}\frac{-1}{r}\log(\Vol(M)-\Vol(B_r))$.
Later, Higuchi \cite{Higuchi2001} improved these Brooks' estimates. 

In many occasions, it is natural to consider a $n-$dimensional Riemannian manifold $(M^n,\langle\,,\,\rangle)$ endowed with a weighted measure of the form $e^{-f}d\sigma$, where $f$ is a smooth function on $M$, called {\it weight function}, and $d\sigma$ is the volume element induced by the metric $\langle\,,\,\rangle$.
A {\it weighted manifold} is a triple $${M}_f^{n}=( M^{n},\langle\,,\,\rangle,e^{-f}d\sigma).$$ The {\it weighted Laplacian operator} $\Delta_f$, defined by
$$\Delta_fu:=\Delta u-\langle\nabla f,\nabla u\rangle,$$
is associated to $e^{-f}d\sigma$ as well as $\Delta$ is associated to $d\sigma.$ Moreover, $\Delta_f$ is a self-adjoint operator on the  $L^{2}_f$ space of square integrable functions on $M$ with respect to the measure $e^{-f}d\sigma,$ and therefore, the $L^{2}_f$ spectrum of $\Delta_f$ on $M$, denoted by $\sigma(-\Delta_f)$, is a subset of $[0,+\infty).$ Also, $\sigma(-\Delta_f)$ can be decomposed into the disjoint union $\sigma_d(-\Delta_f)\cup\sigma_{ess}(-\Delta_f)$, where $\sigma_d(-\Delta_f)$ is the set of isolated eigenvalues of finite multiplicity, called {\it discrete spectrum}, and its complement $\sigma_{ess}(-\Delta_f),$ called {\it essential spectrum}, is the set of eigenvalues of infinite multiplicity and accumulation points of the spectrum.

A natural extension
of the Ricci curvature tensor to this new context is the Bakry-\'{E}mery Ricci curvature
tensor, see \cite{BakryEmery1985}, given by
$$\Ric_f=\Ric+\nabla^{2} f,$$
where $\nabla^{2} f$ is the hessian of $f$ on $M$. It is known that a complete weighted manifold satisfying $\Ric_f\geq\displaystyle{\lambda}$ for some constant $\lambda>0$ is not necessarily compact. One of the examples is the Gaussian shrinking soliton $\left(\R^n,g_{can},e^{-\frac{|x|^2}{4}}d\sigma\right)$ with the canonical metric $g_{can}$ and $\Ric_f=\displaystyle{\frac{1}{2}g_{can}}.$ If $M_f$ is complete noncompact manifold with $\Ric_f\geq0$ and  $\lim_{r\rightarrow\infty}\frac{|f|}{r}=0$, the $L^{2}_f$ essential spectrum  of $-\Delta_f$ was proved to be $[0,+\infty)$ by Silvares \cite{Silvares2014}. Moreover, for a  compact weighted manifold with $\Ric_f\geq \lambda$ for constant $\lambda>0$, the $L^{2}_f$ spectrum  of $-\Delta_f$ is discrete, see \cite{HeinNaber2014}. 

Let $B_r$ be the geodesic ball of $M$ with center in a fixed point $o\in M$  and radius $r>0$. The {\it weighted volume} of $B_r$ is given by $$\Vol_f(B_r)=\int_{B_r}e^{-f}d\sigma$$ 
and the {\it weighted volume} of $M$ is defined by $$\Vol_f(M)=\int_{M}e^{-f}d\sigma.$$ Manifolds with the Bakry-\'{E}mery Ricci curvature bounded below have been studied
by many authors in recent years, particularly some interesting weighted volume estimates and splitting theorems, which can be
found in \cite{WeiWylie2009} and \cite{MunteanuWang2014}, for example. Some results about estimates of weighted volume can be also seen in \cite{ChengMejiaZhou2015} and \cite{ChengZhou2013}. 

The aim of this paper is to give estimates for $\inf\sigma_{ess}(-\Delta_f)$ for complete noncompact manifolds in function of the weighted volume growth of their geodesic balls and spheres. Precisely, we have the following theorem.

\begin{theorem}\label{it4} Let $M_f$ be a complete noncompact weighted manifold. 
	\begin{enumerate}
		\item[(i)] If $\Vol_f(M)=+\infty$, then
		$$
		\inf\sigma_{ess}(-\Delta_f)\leq \frac{\mu_v^2}{4},
		$$
		where \ $		\mu_v=\displaystyle{\liminf_{r\rightarrow\infty}\frac{1}{r}\log\Vol_f(B_r)}$.\vspace{0.2cm}	
		\item[(ii)] If $\Vol_f(M)<+\infty$, then
		\begin{equation}\label{al7}
		\inf\sigma_{ess}(-\Delta_f)\leq \frac{\mu_w^2}{4},
		\end{equation}
		where \  $\mu_w=\displaystyle{\liminf_{r\rightarrow\infty}\frac{-1}{r}\log(\Vol_f(M)-\Vol_f(B_r))}.$
	\end{enumerate}	
	Furthermore, if $M_f=(\R^{n},ds^2,e^{-f}d\sigma)$ with $$f=\frac{r}{2} \ \ \ \ {\rm and} \ \ \ \ ds^2=dr^2+g^2(r)d\theta^2$$
	such that $g:[0,+\infty)\rightarrow\R$ is a nonnegative smooth function satisfying
	\begin{equation}\label{eee2}
	g(0)=0, \ \ \ g'(0)=1, \ \ \ \normalfont{and} \ \ \ g(r)=e^{-\frac{r}{2(n-1)}} \ \ \normalfont{for \ all} \ r\geq r_0>0,
	\end{equation}
	then occurs equality in {\normalfont{(\ref{al7})}}. Here $d\theta^2$ denotes the standard metric on $(n-1)$-dimensional unit sphere $\s_1^{n-1}$ and $r$ is the Euclidian distance to origen.	 
\end{theorem}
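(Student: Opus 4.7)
I rely on the weighted analogue of Persson's characterization of the bottom of the essential spectrum,
\begin{equation*}
\inf\sigma_{ess}(-\Delta_f) \;=\; \lim_{R\to\infty}\, \inf\!\left\{\, \frac{\int_M |\nabla\phi|^2 e^{-f}\,d\sigma}{\int_M \phi^2 e^{-f}\,d\sigma}\,:\, \phi\in C_c^\infty(M\setminus\bar{B}_R),\ \phi\not\equiv 0\,\right\},
\end{equation*}
which holds because $-\Delta_f$ is a nonnegative self-adjoint operator on $L^2_f(M)$. Thus for parts (i) and (ii) it suffices to exhibit test functions supported outside arbitrary compact sets whose Rayleigh quotients approach $\mu_v^2/4$ and $\mu_w^2/4$, respectively.

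For part (i), I adapt Brooks' method. Fix $\alpha>\mu_v$; by definition of $\liminf$ there is a sequence $r_k\to\infty$ along which $\Vol_f(B_{r_k})\leq e^{\alpha r_k}$. I take test functions of the form $\phi_k(x)=\psi_k(r(x))\,e^{-\alpha r(x)/2}$, where $r=d(\cdot,o)$ and $\psi_k$ is a Lipschitz bump with a long plateau on an annulus $[a_k,b_k]\subset(R,r_k)$, with $a_k\to\infty$. Using $|\nabla r|=1$ a.e., a direct computation gives $|\nabla\phi_k|^2 = (\psi_k'-\tfrac{\alpha}{2}\psi_k)^2 e^{-\alpha r}$; the plateau contributes $(\alpha^2/4)\phi_k^2$ to the numerator, while the transitions contribute errors controlled by the volume bound at the endpoints, so that a standard choice of long annuli makes these errors $o(1)$ relative to the bulk. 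Letting $\alpha\searrow\mu_v$ yields (i). Part (ii) is symmetric: I instead take $\phi_k(x)=\psi_k(r(x))\,e^{+\alpha r(x)/2}$ for $\alpha<\mu_w$, and bookkeep via the identity
\begin{equation*}
\int_{M\setminus B_R} e^{\alpha r}e^{-f}d\sigma \;=\; e^{\alpha R}(\Vol_f(M)-\Vol_f(B_R)) + \alpha\int_R^\infty e^{\alpha r}(\Vol_f(M)-\Vol_f(B_r))\,dr,
\end{equation*}
which is finite when $\alpha<\mu_w$ and is dominated by contributions near the $\liminf$ sequence.

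For the equality in the warped product example, the weighted volume density on $\{r\geq r_0\}$ is $e^{-f(r)}g(r)^{n-1}=e^{-r/2}\cdot e^{-r/2}=e^{-r}$, so $\Vol_f(M)-\Vol_f(B_R)=|\s_1^{n-1}|\,e^{-R}$ for $R\geq r_0$, giving $\mu_w=1$; part (ii) then yields $\inf\sigma_{ess}(-\Delta_f)\leq 1/4$. For the matching lower bound, on any $\phi\in C_c^\infty(M\setminus\bar{B}_R)$ with $R\geq r_0$ I discard the nonnegative angular gradient and substitute $\phi(r,\theta)=e^{r/2}v(r,\theta)$ in the remaining radial 1D quadratic form: for each fixed $\theta$,
\begin{equation*}
\int_R^\infty \phi_r^2\,e^{-r}\,dr \;=\; \int_R^\infty \bigl(v_r+\tfrac{v}{2}\bigr)^2 dr \;=\; \int_R^\infty v_r^2\,dr+\tfrac{1}{4}\int_R^\infty v^2\,dr,
\end{equation*}
the cross term vanishing by $v(R,\theta)=0$ and compact support. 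Integrating over $\s_1^{n-1}$ gives $\int_M|\nabla\phi|^2 e^{-f}d\sigma\geq \tfrac{1}{4}\int_M \phi^2 e^{-f}d\sigma$, whence $\inf\sigma_{ess}(-\Delta_f)\geq 1/4$ by Persson.

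\textbf{Main obstacle.} The delicate point is that $\mu_v,\mu_w$ are defined via $\liminf$ rather than $\limsup$, so the volume bound is available only along a sparse sequence of radii. I must place the endpoints of the cutoff annuli on this sequence and verify that the Dirichlet energy of the cutoff is absorbed into an $o(1)$ error relative to the bulk $\alpha^2/4$; once this alignment is done, the otherwise standard Brooks-type calculation goes through.
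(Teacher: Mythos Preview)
Your overall strategy—Persson's characterization plus Brooks-type radial test functions—is exactly the paper's, but your choice of test function has a real gap. With $\phi_k=\psi_k(r)e^{-\alpha r/2}$, the Rayleigh quotient is $\tfrac{\alpha^2}{4}$ plus a remainder of the form $\bigl(\int(\psi_k')^2 e^{-\alpha r}e^{-f}-\alpha\int\psi_k'\psi_k e^{-\alpha r}e^{-f}\bigr)\big/\int\psi_k^2 e^{-\alpha r}e^{-f}$. Placing the cutoff endpoints on the $\liminf$ sequence does bound the \emph{numerator} of this remainder, as you say; the trouble is the \emph{denominator}. Since $\int_M e^{-\alpha r}e^{-f}d\sigma$ can be finite (e.g.\ already on $\mathbb{H}^n$ with $\alpha>n-1$, or on $\mathbb{R}^n$ with any $\alpha>0$), the bulk $\int\psi_k^2 e^{-\alpha r}e^{-f}$ need not diverge as the annulus moves out—indeed it typically tends to $0$—so your assertion that ``a standard choice of long annuli makes these errors $o(1)$ relative to the bulk'' is not justified and can fail. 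In part~(ii) you run into the mirror image of the same problem: with $e^{+\alpha r/2}$ and $\alpha<\mu_w$, you yourself observe that $\int e^{\alpha r}e^{-f}$ is finite, which is exactly what prevents the cutoff errors from being absorbed. The paper (following Brooks and Higuchi) resolves this with a \emph{tent} exponent $h_j(x)=\alpha\rho(x)$ for $\rho\le j$ and $h_j(x)=2\alpha j-\alpha\rho(x)$ for $\rho>j$: the rising portion guarantees $e^{2h_j}\ge 1$ on $B_{2j}$, so the denominator $\to\infty$ simply because $\Vol_f(M)=\infty$ (respectively, in the finite-volume case, because annulus volumes along a $\limsup$ sequence are bounded below), while the falling portion damps the outer cutoff error via the $\liminf$ bound. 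You correctly diagnosed the obstacle (only a sparse sequence of good radii), but the tent is the missing device that lets you exploit the endpoint control \emph{and} keep the bulk large simultaneously.

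Your treatment of the equality example is fine and in fact cleaner than the paper's. You compute $\mu_w=1$ directly from the weighted density $e^{-r}$ and get $\inf\sigma_{ess}\le\tfrac14$ from part~(ii). For the matching lower bound the paper invokes a Barta-type inequality: it exhibits the positive function $u=e^{r/2}$ solving $\Delta_f u+\tfrac14 u=0$ on $\{r\ge r_0\}$ and quotes $\lambda_1^f(M\setminus B_r)\ge\inf(-\Delta_f u/u)=\tfrac14$. Your substitution $\phi=e^{r/2}v$ and the resulting one-dimensional Hardy identity $\int\phi_r^2 e^{-r}dr=\int v_r^2\,dr+\tfrac14\int v^2\,dr$ is an equivalent (and self-contained) route to the same bound.
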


\begin{remark}
	In Theorem 6.2 of \cite{BessaPigolaSetti2013}, Bessa, Pigola, and Setti gave an upper bounds for $\inf\sigma_{ess}(-\Delta_f)$ in terms of the volume growth. Indeed, they show that:
	\begin{enumerate}
		\item[(i)] If $\Vol_f(M)=+\infty$, then
		$$
		\inf\sigma_{ess}(-\Delta_f)\leq \displaystyle{\limsup_{r\rightarrow\infty}\frac{1}{r}\log\Vol_f(B_r)}.
		$$
		\item[(ii)] If $\Vol_f(M)<+\infty$, then
		$$
		\inf\sigma_{ess}(-\Delta_f)\leq \displaystyle{\limsup_{r\rightarrow\infty}\frac{-1}{r}\log(\Vol_f(M)-\Vol_f(B_r))}.
		$$	
	\end{enumerate}
	Therefore, Theorem \ref{it4} gives an improved estimate for $\inf\sigma_{ess}(-\Delta_f)$. 
\end{remark}

Now, by supposing $f=C$ been a constant function,  we have 
$$\Delta_f=\Delta, \ \ \ \ \Vol_f=e^{-C}\Vol, \ \ \ \ {\rm and} \ \ \ \ \vol_f=e^{-C}\vol.$$
Moreover, $\inf\sigma_{ess}(-\Delta_f)=\inf\sigma_{ess}(-\Delta)$ is the bottom of essential spectrum of the Laplacian acting on $L^2(M)$. Therefore, as a consequence of Theorem \ref{it4}, it follows: 

\begin{corollary}\label{itc11} Let $M$ be a noncompact complete manifold.  
	\begin{enumerate}
		\item[(i)] If $\Vol(M)=+\infty$, then
		$$
		\inf\sigma_{ess}(-\Delta)\leq \frac{\mu_v^2}{4},
		$$
		where \ $\mu_v=\displaystyle{\liminf_{r\rightarrow\infty}\frac{1}{r}\log\Vol(B_r)}$.\vspace{0.2cm}
		\item[(ii)] If $\Vol(M)<+\infty$, then
		$$
		\inf\sigma_{ess}(-\Delta)\leq \frac{\mu_w^2}{4},
		$$
		where \ $\mu_w=\displaystyle{\liminf_{r\rightarrow\infty}\frac{-1}{r}\log(\Vol(M)-\Vol(B_r))}.$	
	\end{enumerate}		
\end{corollary}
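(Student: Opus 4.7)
The plan is to use the variational (Persson) characterization of the bottom of the essential spectrum, which is valid in the weighted setting since $\Delta_f$ is self-adjoint on $L^2_f$:
\[
\inf\sigma_{ess}(-\Delta_f)=\sup_{K\Subset M}\inf\left\{\frac{\int_M|\nabla\phi|^2 e^{-f}d\sigma}{\int_M\phi^2 e^{-f}d\sigma}\;:\;\phi\in C_c^\infty(M\setminus K),\ \phi\not\equiv 0\right\}.
\]
Thus for (i) and (ii) it suffices to exhibit, for every $\varepsilon>0$, a sequence of compactly supported radial test functions whose supports escape every compact set and whose Rayleigh quotients lie below $\mu_v^2/4+\varepsilon$, respectively $\mu_w^2/4+\varepsilon$. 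I will use Brooks--type exponential cutoffs, but unlike the $\limsup$ argument of Bessa--Pigola--Setti I will localize the test functions near a \emph{subsequence} $r_k\uparrow\infty$ realizing the $\liminf$.

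For (i), fix $\alpha>\mu_v/2$, choose $r_k\uparrow\infty$ with $\Vol_f(B_{r_k})\le e^{2\alpha r_k}$, and take auxiliary $s_k\uparrow\infty$ with $s_k=o(r_k)$. Set $\phi_k(x)=\psi_k(r(x))$ with $r(x)=d(x,o)$ (used in the variational formula via the usual Lipschitz/approximation argument), where $\psi_k$ is the continuous piecewise function supported in $[s_k,r_k]$ that ramps linearly from $0$ to $1$ on $[s_k,s_k+1]$, equals $e^{-\alpha(t-s_k-1)}$ on $[s_k+1,r_k-1]$, and ramps linearly back to $0$ on $[r_k-1,r_k]$. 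The coarea formula gives
\[
\frac{\int_M|\nabla\phi_k|^2 e^{-f}d\sigma}{\int_M\phi_k^2 e^{-f}d\sigma}=\frac{\int_0^\infty\psi_k'(t)^2A_f(t)\,dt}{\int_0^\infty\psi_k(t)^2A_f(t)\,dt},\qquad A_f(t)=\vol_f(\partial B_t),
\]
and on the exponential core the numerator integrand is $\alpha^2$ times the denominator integrand. The two ramp contributions are estimated by integration by parts, converting $\int A_f$ into differences of $V_f(t)=\Vol_f(B_t)$; the outer error is bounded by $V_f(r_k)e^{-2\alpha(r_k-s_k-2)}\le e^{2\alpha(s_k+2)}$, which is negligible compared to the exponentially large denominator once $s_k=o(r_k)$, and the inner error is handled by further thinning the sequence so that $V_f(s_k)\le e^{2\alpha s_k}$. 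Hence $R_f(\phi_k)\to\alpha^2$, and letting $\alpha\downarrow\mu_v/2$ yields (i). Part (ii) is the analogous argument with the \emph{increasing} exponential $\psi_k(t)=e^{\alpha(t-s_k)}$, $0<\alpha<\mu_w/2$, where integration by parts is performed against the decreasing tail $\widetilde V_f(t)=\Vol_f(M)-V_f(t)$ and the outer boundary term is controlled by the subsequential decay $\widetilde V_f(r_k)\le e^{-(\mu_w-\varepsilon)r_k}$. The main technical obstacle in both parts is precisely this two-scale choice $s_k\ll r_k$, which is where the $\liminf$ hypothesis (rather than the stronger $\limsup$) is exploited.

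For the equality assertion, a direct computation shows that on the exterior $\{r\ge r_0\}$ the weighted volume element is $e^{-f}g^{n-1}dr\,d\theta=e^{-r/2}\cdot e^{-r/2}\,dr\,d\theta=e^{-r}dr\,d\theta$, so $\Vol_f(M)<+\infty$ and $\Vol_f(M)-\Vol_f(B_r)=\vol(\s^{n-1})\,e^{-r}$ for $r\ge r_0$. This gives $\mu_w=1$ and the upper bound $\inf\sigma_{ess}(-\Delta_f)\le 1/4$ via~(ii). For the matching lower bound, observe that on a radial function $u(r)$ with $r\ge r_0$,
\[
-\Delta_f u=-u''+\bigl(f'-(n-1)g'/g\bigr)u'=-u''+u',
\]
and the unitary change of variable $u(r)=e^{r/2}v(r)$, which identifies $L^2(e^{-r}dr)$ with $L^2(dr)$, conjugates $-\Delta_f$ to the one-dimensional operator $-\partial_r^2+\tfrac14$ whose spectrum is $[\tfrac14,\infty)$. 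A spherical-harmonic decomposition handles the non-radial modes: each eigenvalue $\lambda_k>0$ of $-\Delta_{\s^{n-1}}$ contributes a strictly positive potential $\lambda_k/g^2=\lambda_k e^{r/(n-1)}\to+\infty$ that only shifts the spectrum upward. Hence $\inf\sigma(-\Delta_f)\ge 1/4\ge\inf\sigma_{ess}(-\Delta_f)$, which forces equality in~(\ref{al7}).
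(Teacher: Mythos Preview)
Your overall strategy (Persson characterization plus Brooks--Higuchi test functions, localized along a subsequence realizing the $\liminf$) is the right one, and indeed is what underlies the paper's Theorem~\ref{it4}, from which Corollary~\ref{itc11} follows in one line by taking $f$ constant. However, the specific construction you give for part~(i) does not do what you claim.

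The gap is in the sentence ``which is negligible compared to the exponentially large denominator once $s_k=o(r_k)$.'' You bound the outer ramp error by $V_f(r_k)e^{-2\alpha(r_k-s_k-2)}\le e^{2\alpha(s_k+2)}$, but you never produce a lower bound on the denominator that beats this. With the \emph{decreasing} profile $e^{-\alpha(t-s_k-1)}$, the mass of $\int\psi_k^2 A_f$ is concentrated near $t=s_k+1$, and there you have \emph{no} control on $A_f$ or $V_f$ (your $\liminf$ hypothesis only bites at the special radii $r_k$). Concretely, take $V_f(t)=e^{\mu t}$ with $0<\mu<2\alpha$: then the core denominator is $\frac{\mu}{2\alpha-\mu}e^{\mu(s_k+1)}+o(1)$, while your stated outer bound $e^{2\alpha(s_k+2)}$ is strictly larger; the inner ramp error $V_f(s_k+1)-V_f(s_k)$ is likewise of the \emph{same} order as the core. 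In particular your assertion $R_f(\phi_k)\to\alpha^2$ is false as written: even in this model case the ratio overshoots $\alpha^2$ by a term of order $2\alpha-\mu$, and in a general manifold (where $V_f$ can be far less regular between the special radii) no control at all is available.

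The paper circumvents this by using a \emph{tent} exponent $h_j(\rho)=\alpha\rho$ for $\rho\le j$ and $h_j(\rho)=2\alpha j-\alpha\rho$ for $\rho>j$, together with a cutoff $\chi_r$ that is fixed near a \emph{single} compact set $\Omega$. Two features are decisive: first, since $e^{2h_j}\ge 1$ on $B_j$, the denominator is at least $\Vol_f(B_j)-C\to\infty$ using only $\Vol_f(M)=\infty$, with no growth-rate information needed; second, the inner error sits over the fixed annulus $A_\delta(\partial\Omega)$ and is therefore an absolute constant independent of $r,j$. Only the \emph{outer} error, living over $A_\delta(\partial B_{r_n})$, requires the $\liminf$ subsequence, and there the decreasing leg of the tent provides exactly the damping you attempted. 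Your construction, by letting the inner cutoff $s_k\to\infty$ simultaneously with $r_k$, throws away the first of these two simplifications and creates the uncontrolled inner error.

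Finally, the equality discussion you give belongs to Theorem~\ref{it4}, not to Corollary~\ref{itc11}; your argument there (unitary conjugation to $-\partial_r^2+\tfrac14$ plus separation of variables) is correct and is a nice alternative to the paper's approach, which instead exhibits the positive supersolution $u=e^{r/2}$ and invokes the Barta-type lower bound $\lambda_1^f(M\setminus B_r)\ge\inf(-\Delta_fu/u)$.
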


\begin{remark}
	Corollary \ref{itc11} was proved by Higuchi, see Corollary 2 of \cite{Higuchi2001}.
\end{remark}

Let $\Omega\subset M$ be a compact domain. The bottom of the spectrum of $\Delta_f$ on $M\setminus\Omega$ with the Dirichlet boundary condition on $\partial\Omega$ admits the usual variational characterization 
\begin{eqnarray*}
	\lambda_1^f(M\setminus\Omega)=\inf_{u\in C_c^{\infty}(M\setminus\Omega)}\frac{\displaystyle{\int_{M\setminus\Omega}|\nabla u|^2}e^{-f}d\sigma}{\displaystyle{\int_{M\setminus\Omega}u^2}e^{-f}d\sigma},
\end{eqnarray*}
where $ C_c^{\infty}(M\setminus\Omega)$ denotes the set of the smooth functions $u:M\setminus\Omega\rightarrow\R$ with compact support on $M\setminus\Omega$. Moreover, it can be seen in Theorem 6.1 of \cite{BessaPigolaSetti2013} that
\begin{equation}\label{1111}
\inf\sigma_{ess}(-\Delta_f)=\sup_{\Omega}\lambda_1^f(M\setminus\Omega),
\end{equation}
where $\Omega$ runs over the set of compact domains of $M$.

Let $\partial B_r$ be the geodesic sphere of $M$ with center in a fixed point $o\in M$  and radius $r>0$. The {\it weighted volume} of $\partial B_r$ is given by $$\vol_f(\partial B_r)=\int_{\partial B_r}e^{-f}dA,$$
where $dA$ is the volume form on $\partial B_r$.  

Now, we are ready to enunciate the second result of this paper, namely:

\begin{theorem}\label{it1}
	Let $M_f$ be a complete noncompact  weighted manifold and let $\Omega$ be a compact subset of $M$. If there exists a positive constant real $\alpha$ such that 
	$$\left|\frac{d}{dr}\left(\log\vol_f(\partial B_r)\right)\right|\leq \alpha \ \ \ {\normalfont for \ all} \ r\geq r_0,$$
	then
	\begin{equation}\label{e10}
	\lambda_1^f(M\setminus\Omega)\leq\frac{\alpha^2}{4}.
	\end{equation} 
	Consequently,
	\begin{equation}\label{at11}
	\inf\sigma_{ess}(-\Delta_f)\leq\frac{\alpha^2}{4}.
	\end{equation} 
	Furthermore, if $M_f=(\R^{n},ds^2,e^{-f}d\sigma$) with $$f=\frac{\alpha r}{2} \ \ \ \ {\normalfont and} \ \ \ \ ds^2=dr^2+g^2(r)d\theta^2$$
	such that $g:[0,+\infty)\rightarrow\R$ is a nonnegative smooth function satisfying
	\begin{equation}\label{ei13}
	g(0)=0, \ \ \ g'(0)=1, \ \ \ {\normalfont and} \ \ \ g(r)=e^{-\frac{\alpha}{2(n-1)} r} \ \ {\normalfont for \ all} \ r\geq r_0>0,
	\end{equation}
	then occurs equality in {\normalfont (\ref{e10})} and {\normalfont (\ref{at11})} for any compact $\Omega\supset B_{r_0}.$ Here $d\theta^2$ denotes the standard metric on $(n-1)$-dimensional unit sphere $\s_1^{n-1}$ and $r$ is the Euclidian distance to origen.
\end{theorem}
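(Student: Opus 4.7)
The plan is to prove (\ref{e10}) by the variational characterization of $\lambda_1^f(M\setminus\Omega)$ with an explicit radial test function, and then to deduce (\ref{at11}) immediately from (\ref{1111}). Setting $h(r):=\vol_f(\partial B_r)$, I fix $R_1>r_0$ large enough that $B_{R_1}\supset\Omega$, pick $R_2>R_1+2$, and take a smooth cutoff $v$ supported in $(R_1,R_2)$ with $v\equiv 1$ on $[R_1+1,R_2-1]$ and $|v'|\leq 2$. The test function will be the radial function $u(x)=v(r(x))\,h(r(x))^{-1/2}$, whose Rayleigh quotient is controlled via the coarea formula by
$$\int u^2 e^{-f}d\sigma=\int_{R_1}^{R_2}v(r)^2\,dr,\qquad \int|\nabla u|^2 e^{-f}d\sigma=\int_{R_1}^{R_2}\left[\frac{1}{4}\left(\frac{h'}{h}\right)^{2}v^2-\frac{h'}{h}\,vv'+(v')^2\right]dr.$$
Applying $|(\log h)'|\leq\alpha$ termwise and using that $v'$ is supported in two unit-length intervals bounds the Rayleigh quotient by $\frac{(\alpha^2/4)(R_2-R_1)+4\alpha+8}{R_2-R_1-2}$; letting $R_2-R_1\to\infty$ yields (\ref{e10}), and then (\ref{at11}) follows from (\ref{1111}).

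For the equality statement on the rotationally symmetric model of (\ref{ei13}), a direct calculation gives $\vol_f(\partial B_r)=\vol(\s_1^{n-1})\,e^{-\alpha r}$ for $r\geq r_0$, so the hypothesis holds with this exact constant $\alpha$ and the first half of the theorem furnishes the upper bound. The matching lower bound reduces, by monotonicity $\lambda_1^f(M\setminus\Omega)\geq\lambda_1^f(M\setminus B_{r_0})$ for $\Omega\supset B_{r_0}$, to showing $\lambda_1^f(M\setminus B_{r_0})\geq\alpha^2/4$. I expand any $u\in C_c^\infty(M\setminus B_{r_0})$ in spherical harmonics $u(r,\theta)=\sum_k a_k(r)Y_k(\theta)$, so that each $a_k$ satisfies $a_k(r_0)=0$. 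Orthogonality together with the identity $e^{-f(r)}g(r)^{n-1}\equiv e^{-\alpha r}$ on $r\geq r_0$ reduces the problem, after discarding the nonnegative angular gradient contribution, to proving $\int_{r_0}^\infty a_k'(r)^2 e^{-\alpha r}dr\geq\frac{\alpha^2}{4}\int_{r_0}^\infty a_k(r)^2 e^{-\alpha r}dr$ mode by mode; the substitution $a_k(r)=e^{\alpha r/2}w_k(r)$ with $w_k(r_0)=0$ transforms the left side into $\int_{r_0}^\infty[(w_k')^2+\frac{\alpha^2}{4}w_k^2]dr$ after the cross term is killed by integration by parts, which clearly dominates $\frac{\alpha^2}{4}\int_{r_0}^\infty w_k^2 dr$. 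Summing in $k$ delivers the required bound.

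I expect the main technical obstacle to be regularity: the function $u=v(r)\,h(r)^{-1/2}$ is only Lipschitz across the cut locus of the basepoint, so one must either verify that the infimum defining $\lambda_1^f$ is unchanged when $C_c^\infty$ is enlarged to the Lipschitz compactly supported class, or smooth $u$ by a standard mollification off the cut locus. Conceptually, both halves of the argument rest on the same ansatz: the cutoff $h^{-1/2}$ is a near-ground state paying only an $O(1/(R_2-R_1))$ penalty in the general setting, and in the model geometry it coincides, up to a multiplicative constant, with the formal ground state $e^{\alpha r/2}$ of $-\Delta_f$, which is precisely what pins the bottom of the essential spectrum at $\alpha^2/4$ through (\ref{1111}).
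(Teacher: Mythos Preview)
Your proof is correct but takes a genuinely different route from the paper, and it is worth recording the comparison.

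For the upper bound (\ref{e10}), the paper invokes Fite's oscillation theorem: writing $q(r)=(\log\vol_f(\partial B_r))'$, the hypothesis $|q|\le\alpha$ forces every solution of $y''+qy'+\lambda y=0$ to be oscillatory whenever $\lambda>\alpha^2/4$; a solution $y$ with consecutive zeros $r_1<r_2$ then supplies the radial test function $\varphi(x)=y(r(x))$ on the annulus $B_{r_2}\setminus B_{r_1}$, and an integration by parts using the ODE gives Rayleigh quotient exactly $\lambda$. Your argument bypasses oscillation theory entirely by guessing the near-ground-state $h^{-1/2}$ and cutting it off; the coarea computation you wrote is exactly right, and letting the support spread kills the boundary terms. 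Your approach is more elementary and self-contained; the paper's links the estimate to classical Sturm-type criteria, which is the tradition of do~Carmo--Zhou and related work cited there.

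For the equality case, the paper does not compute with spherical harmonics. Instead it observes that on the model space $u=e^{\alpha r/2}$ is a positive solution of $\Delta_f u+\tfrac{\alpha^2}{4}u=0$ for $r\ge r_0$ and then appeals to the Barta-type inequality $\lambda_1^f(M\setminus B_r)\ge\inf(-\Delta_f u/u)$ from \cite{BessaPigolaSetti2013}, Corollary~6.4. Your Hardy-inequality argument via the substitution $a_k=e^{\alpha r/2}w_k$ is of course the variational avatar of exactly this supersolution, so the two proofs are equivalent in spirit; yours is again more hands-on and avoids quoting the external result, while the paper's makes transparent that equality is being driven by the existence of a positive $(\alpha^2/4)$-harmonic function. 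The regularity caveat you flag about the cut locus applies equally to the paper's radial test function and is handled in the standard way.
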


\begin{remark} \label{ri1} The condition 
	$$\frac{d}{dr}\left(\log\vol_f(\partial B_r)\right)\leq \alpha, \ \ \  \ r>0,
	$$
	implies that 
	$$Vol_f(B_r)\leq Ce^{\alpha r},$$ where  $C=1/\alpha$. But $M$ does not necessarily have $Vol_f(M)=+\infty.$ Look to $M_f=(\R^{n},ds^2,e^{-f}d\sigma$) with $f=\alpha r/2$ and $ds^2=dr^2+g^2(r)d\theta^2$ such that $g:[0,+\infty)\rightarrow\R$ is a nonnegative smooth function satisfying \normalfont{(\ref{ei13})}; it has finite weighted volume, i.e.,
	\begin{eqnarray*}
		\Vol_f(B_r)=\Vol_f(B_{r_0})+\omega_n\int_{r_0}^re^{-\alpha t}dt=\Vol_f(B_{r_0})+\frac{\omega_n}{\alpha}\left(-e^{-\alpha r}+e^{-\alpha r_0}\right),
	\end{eqnarray*}    	
	and hence,
	$$\Vol_f(M)=\lim_{r\rightarrow\infty}\Vol_f(B_r)=\Vol_f(B_{r_0})+\frac{\omega_n}{\alpha e^{\alpha r_0}}.$$ 
	This can be viewed in expression (\ref{er1}) of Example \ref{ri3}.  
\end{remark}	

\begin{remark} There is a class of complete Riemannian manifolds that satisfies the conditions of Theorem \ref{it1} (see Example \ref{ri3}).
\end{remark}

Now, by supposing $f$ been a constant function, it follows directly from Theorem \ref{it1} that

\begin{corollary} Let $M$ be a noncompact complete manifold and let $\Omega$ be a compact subset of $M$. If 
	$$\left|\frac{d}{dr}\left(\log\vol(\partial B_r)\right)\right|\leq \alpha \ \ \ {\normalfont for} \ r\geq r_0,$$
	then
	\begin{equation}
	\label{aaa5}
	\lambda_1(M\setminus\Omega)\leq\frac{\alpha^2}{4}.
	\end{equation}
	Consequently,
	\begin{equation}
	\label{aaa6}
	\inf\sigma_{ess}(-\Delta)\leq\frac{\alpha^2}{4}.
	\end{equation}
	Furthermore, if $M=(\R^{n},ds^2)$ with $ds^2=dr^2+g^2(r)d\theta^2$ such that $g:[0,+\infty)\rightarrow\R$ is a nonnegative smooth function satisfying
	$$
	g(0)=0, \ \ \ g'(0)=1, \ \ \ {\normalfont and} \ \ \ g(r)=e^{-\frac{\alpha}{(n-1)} r} \ \ {\normalfont for \ all} \ r\geq r_0>0,
	$$
	then occurs equality in {\normalfont (\ref{aaa5})} and {\normalfont (\ref{aaa6})}
	for any compact $\Omega\supset B_{r_0}.$ Here $d\theta^2$ denotes the standard metric on $(n-1)$-dimensional unit sphere $\s_1^{n-1}$ and $r$ is the Euclidian distance to origen.
\end{corollary}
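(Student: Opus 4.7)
The plan is to obtain this corollary as a direct specialization of Theorem \ref{it1} to constant weight. First I would observe that if $f\equiv C$ is constant then $\nabla f\equiv 0$, so $\Delta_f=\Delta$; moreover $e^{-f}d\sigma=e^{-C}d\sigma$ is a constant multiple of $d\sigma$, and the factor $e^{-C}$ cancels between numerator and denominator in the Rayleigh quotient defining $\lambda_1^f(M\setminus\Omega)$. Hence $\lambda_1^f(M\setminus\Omega)=\lambda_1(M\setminus\Omega)$ and, via \eqref{1111}, $\inf\sigma_{ess}(-\Delta_f)=\inf\sigma_{ess}(-\Delta)$.

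Next, $\vol_f(\partial B_r)=e^{-C}\vol(\partial B_r)$, so a constant shift of $\log$ leaves its $r$-derivative unchanged, and the hypothesis $\left|\tfrac{d}{dr}\log\vol(\partial B_r)\right|\le\alpha$ is equivalent to $\left|\tfrac{d}{dr}\log\vol_f(\partial B_r)\right|\le\alpha$. Applying Theorem \ref{it1} with this constant weight and the same $\alpha$ then produces \eqref{aaa5} and \eqref{aaa6} at once.

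For the sharpness assertion I would check that the unweighted model with $g(r)=e^{-\alpha r/(n-1)}$ for $r\ge r_0$ yields the same sphere area $\vol(\partial B_r)=\omega_n e^{-\alpha r}$ as the weighted model of Theorem \ref{it1}, in which $f=\alpha r/2$ and $g(r)=e^{-\alpha r/(2(n-1))}$ combine to give $\vol_f(\partial B_r)=e^{-\alpha r/2}\omega_n g^{n-1}=\omega_n e^{-\alpha r}$. Since the extremal sequence of radial test functions employed in the equality case of Theorem \ref{it1} depends on the geometry only through $\vol_f(\partial B_r)$ and the associated Rayleigh quotient, the same sequence, now reinterpreted with $f$ constant, realizes equality in \eqref{aaa5} and \eqref{aaa6} for every compact $\Omega\supset B_{r_0}$.

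The argument has no substantive obstacle: the whole reduction rests on the observation that constant shifts of $f$ preserve both the Rayleigh quotient and $\tfrac{d}{dr}\log\vol_f(\partial B_r)$. The one small point to verify is that in the sharpness model the exponent $\alpha/(n-1)$ in $g$ exactly compensates for the absence of the weight factor $e^{-\alpha r/2}$, which is the routine computation just indicated.
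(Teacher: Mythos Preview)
Your proposal is correct and follows exactly the paper's approach: the paper states that this corollary follows directly from Theorem \ref{it1} by taking $f$ constant, without giving any separate proof. One small clarification on the sharpness part: the lower bound in the equality case of Theorem \ref{it1} is obtained not from a sequence of test functions but from a Barta-type inequality applied to the single positive function $u=e^{\alpha r/2}$; nevertheless your observation that both the upper and lower bounds depend on the geometry only through $\vol_f(\partial B_r)$ (equivalently through $\Delta_f r=(n-1)g'/g-f'$, cf.\ Example \ref{ri3}) is exactly right and is what makes the transfer to the unweighted model with $g(r)=e^{-\alpha r/(n-1)}$ go through.
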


Before beginning to enunciate some applications, we need to introduce some definitions that will be necessary to understand the results.

Let ${\overline M}_f^{n+1}$ be a weighted manifold, i.e., $${\overline M}_f^{n}=( \overline M^{n},\langle\,,\,\rangle,e^{-f}d\mu).$$ Let $x:M^n\rightarrow\overline M^{n+1}_f$ be an isometric immersion of a Riemannian orientable manifold $M^n$ into weighted manifold \ $\overline M^{n+1}_f.$ \ The function \ $f:\overline M \rightarrow\R$ \ restricted to $M$ induces a weighted measure $e^{-f}d\sigma$ on $M$. Thus, we have an induced weighted manifold $M_f^n=(M,\langle\,,\,\rangle,e^{-f}d\sigma)$.

The {\it second fundamental form} $A$ of $x$ is defined by
$$A(X,Y)=(\overline\nabla_XY)^{\perp}, \ \ \ \ \ \ X,Y\in T_pM, \ \ p\in M,$$
where $\perp$ symbolizes the projection above the normal bundle of $M$. The {\it weighted mean curvature vector} of $M$ is defined by
$${\bf H}_f={\bf H}+(\overline\nabla f)^{\perp},$$
with ${\bf H}=\tr A.$ The hypersurface $M$ is called {\it $f$-minimal} when its weighted mean curvature vector ${\bf H}_f$ vanishes identically; and when there exists real constant $C$ such that ${\bf H}_f=-C\eta$ with $\eta$ being unit normal vector field, we say the hypersurface $M$ has {\it constant weighted mean curvature}. 

The operator 
$$L_f=\Delta_f+|A|^2+\overline{\Ric_f}(\eta,\eta)$$
is called the {\it $f$-stability operator} of the immersion $x$ and it is associated with the quadratic form
$$I_f(u,u)=-\int_MuL_fue^{-f}d\sigma.$$
For each compact domain $\Omega\subset M$, define the index, $\ind_f\Omega$, of $L_f$ in $\Omega$ as the maximal dimension of a subspace of $C^{\infty}_c(\Omega)$ where $I_f$ is a negative definite. The {\it index}, $\ind_fM$, of $L_f$ in $M$ (or simply, the index of $M$) is then defined by
$$\ind_fM=\sup_{\Omega\subset M}\ind_f\Omega,$$
where \ the \ supreme \ is \ taken \ over \ all \ compact \ domains \ $\Omega\subset M.$  For more details, see \cite{ChengZhou2015}. 

The following results are applications of the estimates of bottom essential spectrum of weighted Laplacian operator given in Theorem \ref{it4} and Theorem \ref{it1}.

\begin{theorem}\label{it5}
	Let $x:M^n\rightarrow\overline M_f^{n+1}$ be an isometric immersion of a complete noncompact manifold $M^n$ into  an oriented complete weighted manifold $\overline{M}_f^{n+1}$ with unit normal vector field  $\eta$. 
	Let
	$$		\mu_v=\displaystyle{\liminf_{r\rightarrow\infty}\frac{1}{r}\log\Vol_f(B_r)} \ \ \ {\normalfont and} \ \ \ \mu_w=\displaystyle{\liminf_{r\rightarrow\infty}\frac{-1}{r}\log(\Vol_f(M)-\Vol_f(B_r))}.
	$$
	If $M$ has constant weighted mean curvature $H_f$ and $\ind_fM<\infty$, then  
	\begin{equation}\label{aa12}
	\frac{H^2_f}{nm}\geq-\frac{\mu^2}{4}+\inf_{M\setminus B_{r}}\left\{\overline{\Ric}_f(\eta,\eta)+\frac{\langle\overline\nabla f,\eta\rangle^2}{n(1+m)}\right\}
	\end{equation}
	and
	$$\frac{H^2_f}{n(1+m)}\leq\frac{\mu^2}{4}-\inf_{M\setminus B_{r}}\left\{\overline{\Ric}_f^{nm}(\eta,\eta)\right\}$$
	for any constant $m>0$, where \
	$\mu=\mu_v$ if $\Vol_f(M)=+\infty$ and  $\mu=\mu_w$ if 	$\Vol_f(M)<+\infty$. Particularly, if $$\overline{\Ric}_f^{nm}(\eta,\eta)\geq\frac{\mu^2}{4}$$
	for some constant positive $m$, then $M$ is a $f$-minimal hypersurface.
\end{theorem}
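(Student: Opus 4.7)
The plan is to combine the finite index hypothesis with Theorem \ref{it4} via the variational characterization in (\ref{1111}) and the identity relating $H$ and $H_f$. First, since $\ind_f M < \infty$, there exists $r_0>0$ such that $M\setminus B_r$ is $f$-stable for every $r\geq r_0$; that is, for every $u\in C_c^\infty(M\setminus B_r)$,
\begin{equation*}
\int_{M\setminus B_r}\bigl(|A|^2+\overline{\Ric}_f(\eta,\eta)\bigr)u^2\,e^{-f}d\sigma\leq\int_{M\setminus B_r}|\nabla u|^2\,e^{-f}d\sigma.
\end{equation*}
Dividing by $\int u^2 e^{-f}d\sigma$ and taking the infimum yields
\begin{equation*}
\inf_{M\setminus B_r}\bigl\{|A|^2+\overline{\Ric}_f(\eta,\eta)\bigr\}\leq\lambda_1^f(M\setminus B_r).
\end{equation*}
On the other hand, by (\ref{1111}) and Theorem \ref{it4} (applied in the appropriate case according to whether $\Vol_f(M)$ is finite), $\lambda_1^f(M\setminus B_r)\leq \inf\sigma_{ess}(-\Delta_f)\leq\mu^2/4$.

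Next, I would record two algebraic bounds relating $|A|^2$ to the constant $H_f^2$. Writing the scalar weighted mean curvature as $H=H_f-\langle\overline\nabla f,\eta\rangle$, the elementary identity $(ma-(1+m)b)^2\geq 0$ with $a=H_f$ and $b=\langle\overline\nabla f,\eta\rangle$ rearranges to
\begin{equation*}
H^2=\bigl(H_f-\langle\overline\nabla f,\eta\rangle\bigr)^2\geq\frac{H_f^2}{1+m}-\frac{\langle\overline\nabla f,\eta\rangle^2}{m},
\end{equation*}
while the identity $(mH+H_f)^2\geq 0$ rearranges, using $\langle\overline\nabla f,\eta\rangle=H_f-H$, to
\begin{equation*}
\frac{H^2}{n}+\frac{H_f^2}{nm}\geq\frac{\langle\overline\nabla f,\eta\rangle^2}{n(1+m)}.
\end{equation*}
Combining each of these with the pinching $|A|^2\geq H^2/n$ produces the two one-sided bounds for $|A|^2$ needed to feed back into the stability inequality.

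Substituting the first bound into $\inf_{M\setminus B_r}\{|A|^2+\overline{\Ric}_f(\eta,\eta)\}\leq\mu^2/4$ and recognizing the $m$-Bakry--Émery Ricci term $\overline{\Ric}_f^{nm}(\eta,\eta)=\overline{\Ric}_f(\eta,\eta)-\langle\overline\nabla f,\eta\rangle^2/(nm)$ yields the second inequality of the statement; substituting the second bound yields the first inequality. Finally, the $f$-minimality assertion follows at once from the second inequality: the hypothesis $\overline{\Ric}_f^{nm}(\eta,\eta)\geq\mu^2/4$ forces $H_f^2/[n(1+m)]\leq 0$, so $H_f\equiv 0$.

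I expect no serious obstacle; the only delicate point is bookkeeping of the two Young-type inequalities so that the correct $m$-weights appear in the $\overline{\Ric}_f$ and $\langle\overline\nabla f,\eta\rangle^2$ terms, and checking that the compact set over which $M$ is $f$-stable can be enlarged to any $B_r$ with $r\geq r_0$, which is standard for operators of Schrödinger type with a finite number of negative eigenvalues.
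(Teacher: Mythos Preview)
Your argument is correct and follows essentially the same strategy as the paper. The only difference is in how the key inequality
\[
\inf_{M\setminus B_r}\bigl\{|A|^2+\overline{\Ric}_f(\eta,\eta)\bigr\}\leq\lambda_1^f(M\setminus B_r)
\]
is obtained: the paper invokes Proposition~5 of \cite{ImperaRimoldi2015} to produce a positive solution of $L_f u=0$ on $M\setminus\Omega$ and then applies the Barta-type inequality of Corollary~6.4 in \cite{BessaPigolaSetti2013}, whereas you use the $f$-stability inequality directly. Both routes are standard and give the same conclusion; yours is slightly more self-contained since it bypasses the existence of a positive Jacobi field. The algebraic step is identical in content to Lemma~\ref{l1} of the paper (your two Young-type inequalities are exactly the two applications of that lemma with the roles of $a$ and $b$ interchanged), and the sign convention $H=H_f-\langle\overline\nabla f,\eta\rangle$ versus the paper's $H=H_f+\langle\overline\nabla f,\eta\rangle$ is immaterial because only $H^2$ enters.
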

Here
$$
{\overline{\Ric}_f}^{nm}=\overline{\Ric_f}-\frac{df\otimes df}{nm}, \ \ \ \ \ m>0,
$$
denotes a generalization of the Bakry-\' Emery Ricci curvature.

As a consequence of the inequality (\ref{aa12}) of Theorem \ref{it5}, it follows 

\begin{corollary}\label{aic1} 
	\ Let \ $\overline{M}_f^{n+1}$ be an oriented complete weighted manifold with $\overline{\Ric}_f\geq k>0$, where $k$ is a fixed constant. Then, there is no complete noncompact $f$-minimal hypersurface $M^{n}$ immersed into  $\overline{M}_f^{n+1}$  with $ind_fM<+\infty$ and satisfying either \
	$\mu_v<2\sqrt k$ \ if \ $\Vol_f(M)=+\infty$ \ or \  $\mu_w<2\sqrt k$ \ if \  $\Vol_f(M)<+\infty$.   
\end{corollary}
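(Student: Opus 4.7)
The plan is to argue by contradiction, assuming the existence of a complete noncompact $f$-minimal hypersurface $M^{n}$ immersed in $\overline{M}_f^{n+1}$ with $\ind_fM<+\infty$ and with either $\mu_v<2\sqrt{k}$ (in the infinite weighted volume case) or $\mu_w<2\sqrt{k}$ (in the finite weighted volume case). The only real input needed is the first estimate (\ref{aa12}) of Theorem \ref{it5}, which is tailored precisely for this sort of rigidity application.

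First I would invoke Theorem \ref{it5}. Since $M$ is $f$-minimal, the weighted mean curvature satisfies $H_f=0$, so the left-hand side of (\ref{aa12}) vanishes, and the inequality becomes
\begin{equation*}
0\;\geq\;-\frac{\mu^{2}}{4}+\inf_{M\setminus B_{r}}\left\{\overline{\Ric}_f(\eta,\eta)+\frac{\langle\overline\nabla f,\eta\rangle^{2}}{n(1+m)}\right\}
\end{equation*}
for every $m>0$, where $\mu$ stands for $\mu_v$ or $\mu_w$ according to whether $\Vol_f(M)$ is infinite or finite. Rearranging and then using the curvature hypothesis $\overline{\Ric}_f\geq k$ on $\overline M$ together with the obvious nonnegativity of $\langle\overline\nabla f,\eta\rangle^{2}/(n(1+m))$, I would conclude
\begin{equation*}
\frac{\mu^{2}}{4}\;\geq\;\inf_{M\setminus B_{r}}\left\{\overline{\Ric}_f(\eta,\eta)+\frac{\langle\overline\nabla f,\eta\rangle^{2}}{n(1+m)}\right\}\;\geq\;k.
\end{equation*}
This forces $\mu\geq 2\sqrt{k}$, contradicting the standing hypothesis that $\mu<2\sqrt{k}$.

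There is essentially no genuine obstacle here beyond verifying that Theorem \ref{it5} is applicable: $M$ is complete noncompact with constant (in fact zero) weighted mean curvature and finite $f$-index, which are exactly its hypotheses, and the right choice between $\mu_v$ and $\mu_w$ is dictated by $\Vol_f(M)$. The parameter $m>0$ plays no role in the final bound, so no optimization over $m$ is required; one simply drops the manifestly nonnegative term involving $\langle\overline\nabla f,\eta\rangle^{2}$. Thus the corollary follows at once from the first half of Theorem \ref{it5}.
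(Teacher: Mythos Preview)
Your proposal is correct and follows exactly the approach indicated in the paper, which simply states that the corollary is a consequence of inequality (\ref{aa12}) of Theorem \ref{it5}. You have filled in the (routine) details: set $H_f=0$, use $\overline{\Ric}_f\geq k$ together with the nonnegativity of the $\langle\overline\nabla f,\eta\rangle^{2}$ term to get $\mu^{2}/4\geq k$, and reach a contradiction with $\mu<2\sqrt{k}$.
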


It is said that the weighted volume of $M$ has {\it polynomial growth} if there exists positive numbers $\alpha$, $C$, and $R_0$  such that $$\Vol_f(B_r)\leq Cr^{\alpha}$$ for any $r\geq R_0.$  Thus,
$$0\leq\mu_v=\displaystyle{\liminf_{r\rightarrow\infty}\frac{1}{r}\log\Vol_f(B_r)}\leq\displaystyle{\liminf_{r\rightarrow\infty}\frac{1}{r}\log(Cr^{\alpha})}=0,$$
i.e., $\mu_v=0$. Therefore, it follows from Theorem \ref{it5} the following consequence:	

\begin{corollary}\label{ic6}
	Let $x:M^n\rightarrow\overline M_f^{n+1}$ be an isometric immersion of a complete noncompact manifold $M^n$ into  an oriented complete weighted manifold $\overline{M}_f^{n+1}$ with unit normal vector field  $\eta$. Assume that the weighted volume of $M$ is infinite and it has polynomial growth. If $M$ has constant weighted mean curvature $H_f$ and $\ind_fM<\infty$, then there exists a constant $r_0>0$ such that for all $r\geq r_0$,
	\begin{equation}\label{e13}
	\frac{H^2_f}{nm}\geq\inf_{M\setminus B_{r}}\left\{\overline{\Ric}_f(\eta,\eta)+\frac{\langle\overline\nabla f,\eta\rangle^2}{n(1+m)}\right\}
	\end{equation}
	and
	$$\frac{H^2_f}{n(1+m)}\leq-\inf_{M\setminus B_{r}}\left\{\overline{\Ric}_f^{nm}(\eta,\eta)\right\},$$
	where $m$ is a positive constant. In particular, if $$\overline{\Ric}_f^{nm}(\eta,\eta)\geq0$$
	for some real $m>0,$ then $M$ is a $f$-minimal hypersurface.
\end{corollary}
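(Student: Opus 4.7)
My plan is to treat this corollary as a direct specialization of Theorem \ref{it5}(i) to the polynomial growth regime. The only content beyond Theorem \ref{it5} is to verify that the polynomial growth hypothesis forces the volume-growth parameter $\mu_v$ to vanish; once this is known, substituting $\mu=\mu_v=0$ into the two estimates of Theorem \ref{it5} kills the $\mu^2/4$ terms and yields exactly the two inequalities of the corollary.

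First I would confirm that $\mu_v=0$. Since $\Vol_f(M)=+\infty$, one has $\Vol_f(B_r)\to\infty$, so $\log\Vol_f(B_r)>0$ for $r$ large and hence $\mu_v\geq 0$. Conversely, polynomial growth $\Vol_f(B_r)\leq Cr^{\alpha}$ for $r\geq R_0$ gives
$$\mu_v=\liminf_{r\to\infty}\frac{\log\Vol_f(B_r)}{r}\leq\liminf_{r\to\infty}\frac{\log C+\alpha\log r}{r}=0,$$
so $\mu_v=0$, as already observed in the paragraph preceding the corollary statement.

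Next I would invoke Theorem \ref{it5} in the case $\Vol_f(M)=+\infty$, for which $\mu=\mu_v=0$. The first inequality of that theorem then reduces to
$$\frac{H^2_f}{nm}\geq\inf_{M\setminus B_{r}}\left\{\overline{\Ric}_f(\eta,\eta)+\frac{\langle\overline\nabla f,\eta\rangle^2}{n(1+m)}\right\},$$
and the second to
$$\frac{H^2_f}{n(1+m)}\leq -\inf_{M\setminus B_{r}}\left\{\overline{\Ric}_f^{nm}(\eta,\eta)\right\},$$
which are precisely the claims. The constant $r_0$ in the corollary is inherited from the finite-index hypothesis: $\ind_fM<\infty$ produces a compact set $\Omega$, and hence a ball $B_{r_0}\supset\Omega$, such that the quadratic form $I_f$ is non-negative on $C_c^\infty(M\setminus B_{r_0})$; this is the regime in which the stability argument underlying Theorem \ref{it5} (via the variational characterization and the identification (\ref{1111})) operates, so the inequalities hold with $\inf$ taken over $M\setminus B_r$ for every $r\geq r_0$. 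The concluding $f$-minimality assertion is then immediate: if $\overline{\Ric}_f^{nm}(\eta,\eta)\geq 0$ for some $m>0$, the second inequality forces $H_f^2\leq 0$, whence $H_f=0$.

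I do not anticipate a serious obstacle here, since the corollary is essentially a bookkeeping consequence of Theorem \ref{it5}. The only care required is to cite the correct (infinite weighted volume) case of Theorem \ref{it5} and to be explicit about the origin of $r_0$ in the finite-index hypothesis, so that the localization of the infimum to $M\setminus B_r$ for $r\geq r_0$ is justified.
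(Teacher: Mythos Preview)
Your proposal is correct and follows exactly the paper's approach: the paragraph immediately preceding the corollary computes $\mu_v=0$ from polynomial growth (and implicitly $\mu_v\ge 0$ from infinite weighted volume), and then the corollary is declared an immediate consequence of Theorem~\ref{it5}. Your write-up merely supplies a bit more detail on the origin of $r_0$ and the concluding $f$-minimality statement, but the argument is the same.
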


Observe also that if we assume $Vol_f(B_r)\leq Ce^{\alpha r}$, then
$$\mu_v=\displaystyle{\liminf_{r\rightarrow\infty}\frac{1}{r}\log\Vol_f(B_r)}\leq\displaystyle{\liminf_{r\rightarrow\infty}\frac{1}{r}\log(Ce^{\alpha r})}=\alpha.$$
Therefore, as a consequence of Theorem \ref{it5}, we obtain the following:

\begin{corollary}\label{ic7}
	Let $x:M^n\rightarrow\overline M_f^{n+1}$ be an isometric immersion of a complete noncompact manifold $M^n$ into  an oriented complete weighted manifold $\overline{M}_f^{n+1}$ with unit normal vector field  $\eta$. Assume that the weighted volume of $M$ is infinite and it satisfies $Vol_f(B_r)\leq Ce^{\alpha r}$ for some constant $C$, $\alpha$, and $r\geq 0$. If $M$ has constant weighted mean curvature $H_f$ and $\ind_fM<\infty$,  then  there exists a constant $r_0>0$ such that for all $r\geq r_0$,
	\begin{equation}\label{e122}
	\frac{H^2_f}{nm}\geq-\frac{\alpha^2}{4}+\inf_{M\setminus B_{r}}\left\{\overline{\Ric}_f(\eta,\eta)+\frac{\langle\overline\nabla f,\eta\rangle^2}{n(1+m)}\right\}
	\end{equation}
	and
	$$\frac{H^2_f}{n(1+m)}\leq\frac{\alpha^2}{4}-\inf_{M\setminus B_{r}}\left\{\overline{\Ric}_f^{nm}(\eta,\eta)\right\},$$
	where $m$ is a positive constant. Particularly, if $$\overline{\Ric}_f^{nm}(\eta,\eta)\geq\frac{\alpha^2}{4}$$ for some real $m>0,$ then $M$ is a $f$-minimal hypersurface.
\end{corollary}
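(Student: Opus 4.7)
The plan is to derive Corollary \ref{ic7} as a direct specialization of Theorem \ref{it5}, whose hypotheses (complete noncompact $M$, constant weighted mean curvature, finite $f$-index) are exactly what we assume here. Since $\Vol_f(M)=+\infty$, Theorem \ref{it5} applies with $\mu=\mu_v$, giving, for some $r_0>0$ and all $r\geq r_0$,
$$\frac{H_f^2}{nm}\geq -\frac{\mu_v^2}{4}+\inf_{M\setminus B_r}\Bigl\{\overline{\Ric}_f(\eta,\eta)+\frac{\langle\overline\nabla f,\eta\rangle^2}{n(1+m)}\Bigr\}$$
and
$$\frac{H_f^2}{n(1+m)}\leq \frac{\mu_v^2}{4}-\inf_{M\setminus B_r}\bigl\{\overline{\Ric}_f^{nm}(\eta,\eta)\bigr\}.$$

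The first key step is to control $\mu_v$ from above using the hypothesis $\Vol_f(B_r)\leq Ce^{\alpha r}$. Taking logarithms, dividing by $r$, and passing to the $\liminf$ yields
$$\mu_v=\liminf_{r\to\infty}\frac{1}{r}\log\Vol_f(B_r)\leq \liminf_{r\to\infty}\frac{\log C+\alpha r}{r}=\alpha.$$
Because $\Vol_f(M)=+\infty$ forces $\Vol_f(B_r)\to\infty$, one also has $\mu_v\geq 0$, and hence $0\leq \mu_v^2\leq \alpha^2$.

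Substituting $\mu_v^2\leq\alpha^2$ into the two inequalities above is the decisive move. In the first, $-\mu_v^2/4\geq -\alpha^2/4$, so the right-hand side only decreases when $\mu_v$ is replaced by $\alpha$, giving (\ref{e122}). In the second, $\mu_v^2/4\leq \alpha^2/4$, so the right-hand side only increases under the same replacement, yielding the claimed upper bound for $H_f^2/(n(1+m))$. The final statement is then immediate: if $\overline{\Ric}_f^{nm}(\eta,\eta)\geq \alpha^2/4$ on $M\setminus B_r$ for some $m>0$, the second inequality collapses to $H_f^2/(n(1+m))\leq 0$, forcing $H_f=0$ and making $M$ an $f$-minimal hypersurface.

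There is essentially no obstacle here beyond bookkeeping: the content lies entirely in Theorem \ref{it5}, and the only real care needed is to verify that the two direction-of-inequality substitutions ($-\mu_v^2/4\geq -\alpha^2/4$ on the lower bound, $\mu_v^2/4\leq \alpha^2/4$ on the upper bound) are each consistent with the inequality being weakened in the correct sense.
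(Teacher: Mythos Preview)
Your proof is correct and follows exactly the approach of the paper: observe that the exponential volume bound forces $\mu_v\leq\alpha$, and then invoke Theorem \ref{it5} with $\mu=\mu_v$. Your extra remark that $\Vol_f(M)=+\infty$ implies $\mu_v\geq0$, so that $\mu_v^2\leq\alpha^2$, is a point of care the paper leaves implicit but which is indeed needed for the substitution to go the right way in both inequalities.
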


\begin{remark}
	When $f$ is a constant function, Corollary \ref{ic6} was obtained by Alencar and do Carmo, see Theorem 1.1 of \cite{AlencardoCarmo1993}, and improved by do Carmo and Zhou, see Theorem 4.1 of \cite{doCarmoZhou1999}. Already, Corollary \ref{ic7} was proved by do Carmo and Zhou, see Theorem 4.4 of \cite{doCarmoZhou1999}.
\end{remark}

By using the inequality (\ref{e13}) of the Corollary \ref{ic6} and the inequality (\ref{e122}) of the Corollary \ref{ic7}, we can acquire the next two results:

\begin{corollary}\label{ic1} Let $\overline{M}_f^{n+1}$ be an oriented complete weighted manifold with $\overline{\Ric}_f\geq k>0$, where $k$ is a fixed constant. Then, there is no complete noncompact $f$-minimal hypersurface $M^{n}$ immersed into $\overline{M}_f^{n+1}$ with $Vol_f(M)=+\infty$, $ind_fM<+\infty$, and polynomial growth weighted volume.
\end{corollary}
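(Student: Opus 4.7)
The plan is to argue by contradiction, invoking Corollary \ref{ic6}. Suppose such a complete noncompact $f$-minimal hypersurface $M^n$ exists; then $H_f = 0$, and by assumption all remaining hypotheses of Corollary \ref{ic6} are in force: $\Vol_f(M) = +\infty$, the weighted volume of $M$ has polynomial growth, and $\ind_f M < +\infty$. Moreover, the ambient bound $\overline{\Ric}_f \geq k > 0$ is imposed on $\overline{M}_f^{n+1}$.

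Fix any constant $m > 0$. Substituting $H_f = 0$ into inequality \eqref{e13} yields, for every $r \geq r_0$,
$$
0 \;=\; \frac{H_f^{2}}{nm} \;\geq\; \inf_{M\setminus B_{r}}\left\{\overline{\Ric}_f(\eta,\eta)+\frac{\langle\overline\nabla f,\eta\rangle^{2}}{n(1+m)}\right\}.
$$
Since $\eta$ is a unit normal, the hypothesis $\overline{\Ric}_f \geq k$ forces $\overline{\Ric}_f(\eta,\eta) \geq k$ pointwise on $M$, while the remaining bracketed term is a nonnegative square divided by a positive constant. Hence the right-hand infimum is bounded below by $k > 0$, contradicting the chain starting with $0$. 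The assumed hypersurface therefore cannot exist.

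All of the substantive analytic content — the spectral estimates of Theorem \ref{it4}, the mean-curvature inequality of Theorem \ref{it5}, and its specialization to polynomial weighted volume growth in Corollary \ref{ic6} — has already been established upstream. Consequently the only real task is the bookkeeping above: checking that the hypotheses feed correctly into Corollary \ref{ic6}, inserting $H_f = 0$, and exploiting the strict positivity of $\overline{\Ric}_f$ to close the contradiction. There is no genuine obstacle beyond this short verification.
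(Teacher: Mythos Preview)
Your proof is correct and follows exactly the approach indicated in the paper, which states that Corollary~\ref{ic1} is obtained by using inequality~(\ref{e13}) of Corollary~\ref{ic6}. Your contradiction argument---inserting $H_f=0$ into (\ref{e13}) and noting that $\overline{\Ric}_f(\eta,\eta)\geq k>0$ while the remaining term is nonnegative---is precisely the intended derivation.
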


\begin{corollary}\label{ic8} Let $\overline{M}_f^{n+1}$ be an oriented complete weighted manifold with $\overline{\Ric}_f\geq k>0$, where $k$ is a fixed constant. Then there is no complete noncompact $f$-minimal hypersurface $M^{n}$ immersed into $\overline{M}_f^{n+1}$ with $ind_fM<+\infty$, $\Vol_f(M)=+\infty$,  and  $Vol_f(B_r)\leq Ce^{\alpha r}$ for any $r\geq0$ and $\alpha<2\sqrt k$.
\end{corollary}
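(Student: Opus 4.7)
The plan is to argue by contradiction, feeding the hypotheses directly into Corollary \ref{ic7}. Suppose that a complete noncompact $f$-minimal hypersurface $M^n$ with the stated properties exists. The goal is then to reduce everything to a single numerical inequality.

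First I would check that all the hypotheses of Corollary \ref{ic7} are in force: $M$ is complete and noncompact by assumption, $\Vol_f(M)=+\infty$, the weighted ball growth bound $\Vol_f(B_r)\leq Ce^{\alpha r}$ is given, and $\ind_fM<+\infty$. Finally, an $f$-minimal hypersurface has $\mathbf{H}_f\equiv 0$, so in particular it has constant weighted mean curvature $H_f=0$. Thus Corollary \ref{ic7} applies and yields, for some $r_0>0$, every $r\geq r_0$, and any $m>0$,
$$
0=\frac{H_f^2}{nm}\geq -\frac{\alpha^2}{4}+\inf_{M\setminus B_r}\left\{\overline{\Ric}_f(\eta,\eta)+\frac{\langle\overline\nabla f,\eta\rangle^2}{n(1+m)}\right\}.
$$

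Next I would exploit the curvature bound. Since $\overline{\Ric}_f\geq k$, the pointwise estimate $\overline{\Ric}_f(\eta,\eta)\geq k$ holds on all of $\overline M$, and the term $\langle\overline\nabla f,\eta\rangle^2/(n(1+m))$ is manifestly nonnegative. Therefore the infimum on the right-hand side is bounded below by $k$, and rearranging gives $\alpha^2/4\geq k$, i.e. $\alpha\geq 2\sqrt{k}$. This contradicts the hypothesis $\alpha<2\sqrt{k}$, completing the proof.

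I do not anticipate any real obstacle here: once Corollary \ref{ic7} is in hand the entire argument reduces to substituting $H_f=0$ and applying the pointwise lower bound on $\overline{\Ric}_f$. The only subtlety worth double-checking is that the hypotheses of Corollary \ref{ic7} are all strictly satisfied (in particular, $f$-minimality is the special case of constant weighted mean curvature with $H_f=0$, so Corollary \ref{ic7} genuinely applies), after which the conclusion $\alpha\geq 2\sqrt{k}$ is immediate and contradicts the assumption.
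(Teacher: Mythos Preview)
Your proof is correct and follows exactly the approach the paper intends: the paper explicitly states that Corollary~\ref{ic8} is obtained from inequality~(\ref{e122}) of Corollary~\ref{ic7}, and your argument carries this out precisely by substituting $H_f=0$ and using $\overline{\Ric}_f(\eta,\eta)\geq k$ together with the nonnegativity of $\langle\overline\nabla f,\eta\rangle^2/(n(1+m))$ to derive the contradiction $\alpha\geq 2\sqrt{k}$.
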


Now, using the estimate of $\lambda_1(M\setminus\Omega)$ viewed in Theorem \ref{it1}, we obtain the application:
\begin{theorem}\label{it3}
	Let $x:M^n\rightarrow\overline M_f^{n+1}$ be an isometric immersion of a complete noncompact manifold $M^n$ into  an oriented complete weighted manifold $\overline{M}_f^{n+1}$ with unit normal vector field  $\eta$. Assume that $$\left|\displaystyle{\frac{d}{dr}}\left(\log\vol_f(\partial B_r)\right)\right|\leq\alpha \ \ {\normalfont for \ all} \ r\geq t_0>0.$$
	If $M$ has constant weighted mean curvature $H_f$ and $\ind_fM<\infty$,  then there exists a constant $r_0>0$ such that for all $r\geq r_0$,
	\begin{equation}\label{e12}
	\frac{H^2_f}{nm}\geq-\frac{\alpha^2}{4}+\inf_{M\setminus B_{r}}\left\{\overline{\Ric}_f(\eta,\eta)+\frac{\langle\overline\nabla f,\eta\rangle^2}{n(1+m)}\right\}
	\end{equation}
	and
	$$\frac{H^2_f}{n(1+m)}\leq\frac{\alpha^2}{4}-\inf_{M\setminus B_{r}}\left\{\overline{\Ric}_f^{nm}(\eta,\eta)\right\}.$$
	Particularly, if $$\overline{\Ric}_f^{nm}(\eta,\eta)\geq\frac{\alpha^2}{4}$$ for some real $m>0,$ then $M$ is a $f$-minimal hypersurface.
\end{theorem}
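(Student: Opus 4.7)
The plan is to combine Theorem~\ref{it1} with the standard consequence of $\ind_f M<\infty$ that the $f$-stability form $I_f$ is nonnegative on compactly supported test functions living outside some compact set, and then to convert the resulting pointwise control of $|A|^2+\overline{\Ric}_f(\eta,\eta)$ into the two stated inequalities by invoking the constant weighted mean curvature identity together with two complementary Young-type inequalities.

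For the first step, I would argue that there is a compact $\Omega_0\subset M$ such that $I_f(u,u)\ge 0$ for every $u\in C_c^\infty(M\setminus\Omega_0)$. If no such $\Omega_0$ existed, exhausting $M$ by compacts would let me build an infinite-dimensional negative-definite subspace for $I_f$, contradicting $\ind_f M<\infty$; this is the weighted analogue of the classical Fischer--Colbrie observation. Integration by parts against $L_f=\Delta_f+|A|^2+\overline{\Ric}_f(\eta,\eta)$ rewrites the stability statement as
\begin{equation*}
\int_{M\setminus\Omega_0}\bigl(|A|^2+\overline{\Ric}_f(\eta,\eta)\bigr)u^2 e^{-f}d\sigma\le\int_{M\setminus\Omega_0}|\nabla u|^2 e^{-f}d\sigma.
\end{equation*}
Fix $r_0$ with $B_{r_0}\supset\Omega_0$ and $r_0\ge t_0$. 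For $r\ge r_0$ and $u\in C_c^\infty(M\setminus B_r)$, bounding the left-hand side below by $\inf_{M\setminus B_r}(|A|^2+\overline{\Ric}_f(\eta,\eta))\int u^2 e^{-f}d\sigma$ and taking the infimum over such $u$, together with Theorem~\ref{it1}, yields the key inequality
\begin{equation*}
\inf_{M\setminus B_r}\bigl(|A|^2+\overline{\Ric}_f(\eta,\eta)\bigr)\le\lambda_1^f(M\setminus B_r)\le\frac{\alpha^2}{4}.
\end{equation*}

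For the second step, the constant weighted mean curvature condition $\mathbf{H}_f=-H_f\eta$ combined with the identity $\mathbf{H}_f=\mathbf{H}+\langle\overline\nabla f,\eta\rangle\eta$ gives $|\mathbf{H}|^2=(H_f+s)^2$, where I abbreviate $s:=\langle\overline\nabla f,\eta\rangle$, and the trace Cauchy--Schwarz inequality then yields $|A|^2\ge(H_f+s)^2/n$. Two opposite Young inequalities of the form $2H_f s\ge -tH_f^2-s^2/t$ finish the argument: the choice $t=(1+m)/m$ produces $(H_f+s)^2\ge -H_f^2/m+s^2/(1+m)$, which plugged into the key inequality and rearranged gives the first stated estimate; the choice $t=m/(1+m)$ produces $(H_f+s)^2\ge H_f^2/(1+m)-s^2/m$, and the definition $\overline{\Ric}_f^{nm}(\eta,\eta)=\overline{\Ric}_f(\eta,\eta)-s^2/(nm)$ then converts this into the second stated estimate. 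The $f$-minimal consequence is immediate, since the hypothesis $\overline{\Ric}_f^{nm}(\eta,\eta)\ge\alpha^2/4$ forces $H_f^2/(n(1+m))\le 0$.

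The main obstacle is the first step: one needs the weighted Fischer--Colbrie fact that finite $f$-index implies nonnegativity of $I_f$ outside a compact set, and although this is morally standard it should be invoked carefully since the integration by parts is performed against the weighted measure. The remainder of the argument is essentially algebra, where the only care needed is to pair the two complementary Young inequalities with the two sharp directions of the estimates.
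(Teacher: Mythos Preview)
Your proof is correct and follows essentially the same structure as the paper's: both combine Theorem~\ref{it1} with the finite-index hypothesis to obtain
\[
\inf_{M\setminus B_r}\bigl(|A|^2+\overline{\Ric}_f(\eta,\eta)\bigr)\le\lambda_1^f(M\setminus B_r)\le\frac{\alpha^2}{4},
\]
and then use $|A|^2\ge H^2/n$ together with the two complementary Young-type inequalities (your Young manipulations are exactly the content of the paper's Lemma~\ref{l1}) to split $H^2=(H_f+\langle\overline\nabla f,\eta\rangle)^2$ in the two directions.

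The only genuine difference is in how the key inequality is extracted from $\ind_f M<\infty$. The paper quotes Proposition~5 of \cite{ImperaRimoldi2015} to produce a positive solution $u>0$ of $L_f u=0$ on $M\setminus\Omega$ and then applies the Barta-type bound $\lambda_1^f(M\setminus B_r)\ge\inf_{M\setminus B_r}(-\Delta_f u/u)=\inf_{M\setminus B_r}(|A|^2+\overline{\Ric}_f(\eta,\eta))$. You instead use the more elementary fact that finite $f$-index forces $I_f\ge 0$ outside a compact, and insert this directly into the variational characterization of $\lambda_1^f$. Your route avoids the extra cited ingredients (positive Jacobi field plus Barta) at no cost, while the paper's route has the minor advantage that the pointwise identity $-\Delta_f u/u=|A|^2+\overline{\Ric}_f(\eta,\eta)$ makes the passage to $H^2/n$ a single pointwise step rather than a comparison of infima. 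Either way the argument goes through cleanly.
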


As a consequence of the inequality (\ref{e12}) of Theorem \ref{it3}, it follows 

\begin{corollary}\label{ic2} Let $\overline{M}_f^{n+1}$ be an oriented complete weighted manifold with $\overline{\Ric}_f\geq k>0$, where $k$ is a fixed constant. Then, there is no complete noncompact $f$-minimal hypersurface $M^{n}$ immersed into $\overline{M}_f^{n+1}$ with $ind_fM<+\infty$ and $$\left|\displaystyle{\frac{d}{dr}}\left(\log\vol_f(\partial B_r)\right)\right|\leq \alpha <2\sqrt k$$
	for all $r\geq t_0>0.$  
\end{corollary}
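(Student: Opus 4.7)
\medskip

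The plan is a direct proof by contradiction using Theorem \ref{it3}. Suppose such a complete noncompact $f$-minimal hypersurface $M^n$ exists, so that $H_f = 0$, $\ind_f M < \infty$, and the bound $|(\log \vol_f(\partial B_r))'| \le \alpha < 2\sqrt k$ holds for all $r \ge t_0$. The hypotheses of Theorem \ref{it3} are then all in place, so inequality (\ref{e12}) applies: for some $m>0$ there is an $r_0>0$ such that for every $r \ge r_0$,
\begin{equation*}
\frac{H_f^2}{nm} \ \ge\ -\frac{\alpha^2}{4} + \inf_{M\setminus B_r}\left\{\overline{\Ric}_f(\eta,\eta) + \frac{\langle \overline\nabla f,\eta\rangle^2}{n(1+m)}\right\}.
\end{equation*}

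Next I would exploit the two cheap pointwise bounds afforded by the curvature assumption and the non-negativity of a square. Because $\overline{\Ric}_f \ge k$ on all of $\overline M$, one has $\overline{\Ric}_f(\eta,\eta) \ge k$ at every point of $M$; and the term $\langle \overline\nabla f,\eta\rangle^2/(n(1+m))$ is non-negative. Hence
\begin{equation*}
\inf_{M\setminus B_r}\left\{\overline{\Ric}_f(\eta,\eta) + \frac{\langle \overline\nabla f,\eta\rangle^2}{n(1+m)}\right\} \ \ge\ k.
\end{equation*}
Combining this with the previous display and using $H_f = 0$ gives $0 \ge -\alpha^2/4 + k$, that is, $\alpha \ge 2\sqrt k$. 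This contradicts the standing assumption $\alpha < 2\sqrt k$, and the corollary follows.

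There is essentially no obstacle here: the argument is a one-line substitution into Theorem \ref{it3} once $H_f = 0$ is used. The only thing to verify is that the hypotheses of Theorem \ref{it3} are indeed satisfied under those of Corollary \ref{ic2}, namely that $|(\log \vol_f(\partial B_r))'| \le \alpha$ is the exact control required, and that $f$-minimality is a special case of constant weighted mean curvature (with constant $0$); both are immediate from the definitions given in the introduction.
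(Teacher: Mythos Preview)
Your proof is correct and matches the paper's intended argument: the paper simply states that Corollary \ref{ic2} follows from inequality (\ref{e12}) of Theorem \ref{it3}, and your derivation spells out exactly this implication. One minor quibble is your phrasing ``for some $m>0$'': Theorem \ref{it3} actually gives the inequality for every $m>0$, but since the contradiction only requires a single value of $m$, this does not affect the validity of your argument.
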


\section{Estimates for the Bottom of Essential Spectrum}

Let $M_f=(M^{n},\langle\,,\,\rangle,e^{-f}d\sigma)$ be a weighted manifold and let $K\subset M$ be a compact set. For a positive number $\delta>0,$ define the set 
$$
A_{\delta}(\partial K)=\{x\in M\setminus K; \ \ \rho(x,\partial K)\leq\delta\}
$$
with $\rho(x,\partial K)$ denoting the distance between $x$ and $\partial K.$ Moreover, consider the amount
\begin{equation}\label{a12}
\mu_{\delta}(r)=\frac{1}{r}\log \Vol_f(A_{\delta}(\partial B_r)),
\end{equation}
where $\partial B_r$ is the geodesic sphere of radius $r>0$ and center in a fixed point $o\in M$.

Established the above notations, we get the following:

\begin{lemma}\label{al1}
	Let $M_f$ be a complete noncompact weighted manifold. For any fixed $\delta>0$, we set
	\begin{equation}\label{a13}
	\mu_{\delta}=\liminf_{r\rightarrow\infty}\mu_{\delta}(r) \ \ \ \ \ \ {\normalfont and} \ \ \ \ \ \ \ 
	\bar\mu_{\delta}=\limsup_{r\rightarrow\infty}\mu_{\delta}(r),	
	\end{equation}		
	with $\mu_{\delta}(r)$ defined in \normalfont{(\ref{a12})}. \vspace{0.3cm}
	\begin{enumerate}
		\item[(i)] 	If $\Vol_f(M)=+\infty$, then $\inf\sigma_{ess}(-\Delta_f)\leq\mu_{\delta}^2/4.$ Besides that, \\  $\inf\sigma_{ess}(-\Delta_f)=0$ if $\mu_{\delta}<0.$\vspace{0.3cm}
		\item[(ii)] If $\Vol_f(M)<+\infty$, then $\inf\sigma_{ess}(-\Delta_f)\leq \bar\mu^2_{\delta}/4.$ 	
	\end{enumerate}
\end{lemma}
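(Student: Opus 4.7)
The plan is to invoke the variational identity (\ref{1111}), namely $\inf\sigma_{ess}(-\Delta_f)=\sup_\Omega\lambda_1^f(M\setminus\Omega)$, and for each compact $\Omega\subset B_R$ to exhibit a family $u_k\in C_c^\infty(M\setminus B_R)$ of compactly supported test functions whose Rayleigh quotient converges to the claimed bound. I would use radial test functions $u_k(x)=\psi_k(\rho(x))$, for which the coarea formula reduces the Rayleigh quotient to the one-dimensional ratio $\int\psi_k'(r)^2 v_f(r)\,dr/\int\psi_k(r)^2 v_f(r)\,dr$, where $v_f(r):=\vol_f(\partial B_r)$; the hypothesis enters through the identity $\Vol_f(A_\delta(\partial B_r))=\int_r^{r+\delta}v_f\,dt$.

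For the elementary subclaim in part (i), namely that $\mu_\delta<0$ forces $\inf\sigma_{ess}(-\Delta_f)=0$, I would pick a subsequence $r_k\to\infty$ with $\mu_\delta(r_k)\to\mu_\delta<0$, so that $\Vol_f(A_\delta(\partial B_{r_k}))\to 0$. Choose $\psi_k$ piecewise linear, equal to $0$ on $[0,R]$, ramping to $1$ linearly on $[R,R+1]$, staying at $1$ on $[R+1,r_k]$, and decaying linearly back to $0$ on $[r_k,r_k+\delta]$. Then $\int\psi_k^2 v_f\,dr\geq\Vol_f(B_{r_k}\setminus B_{R+1})\to\Vol_f(M)-\Vol_f(B_{R+1})=+\infty$, while the numerator's outer piece is bounded by $\delta^{-2}\Vol_f(A_\delta(\partial B_{r_k}))\to 0$ and the inner piece is a fixed constant; hence the Rayleigh quotient tends to $0$.

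For the main inequalities in (i) and (ii) I would employ exponentially profiled test functions $\psi_k(r)=\eta_k(r)e^{-\alpha r}$ with $\alpha=\mu_\delta/2$ in case (i) and $\alpha=\bar\mu_\delta/2$ in case (ii), where $\eta_k$ is a Lipschitz cutoff supported in $[R,r_k+\delta]$ equal to $1$ on $[R+1,r_k]$ and $r_k\to\infty$ is chosen along the subsequence realizing the liminf (resp.\ limsup). Expanding $\psi_k'=\eta_k'e^{-\alpha r}-\alpha\eta_k e^{-\alpha r}$ and integrating yields
\[
\int\psi_k'(r)^2 v_f(r)\,dr = \alpha^2\int\psi_k(r)^2 v_f(r)\,dr + E_k,
\]
where the error term $E_k$ collects the contributions from $\eta_k'\neq 0$, supported on $[R,R+1]\cup[r_k,r_k+\delta]$. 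The inner piece of $E_k$ is an absolute constant depending on $R,\alpha,\delta$, while the outer piece is controlled by $e^{-2\alpha r_k}\Vol_f(A_\delta(\partial B_{r_k}))$, a quantity tamed by the subsequence inequality $\mu_\delta(r_k)\leq\mu_\delta+\epsilon$ (resp.\ $\geq\bar\mu_\delta-\epsilon$).

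The main obstacle is to show $E_k/\int\psi_k^2 v_f\,dr\to 0$ as $k\to\infty$. Here the asymmetry between pointwise (all large $r$) and subsequence information in the liminf and limsup definitions matters: in (i), the pointwise lower bound $\Vol_f(A_\delta(\partial B_r))\geq e^{(\mu_\delta-\epsilon)r}$ valid for $r\geq r_0$ feeds the denominator through a shell-by-shell decomposition, while the subsequence upper bound at $r=r_k$ controls the outer cutoff; in (ii), the finite-volume hypothesis forces the outer-cutoff contribution to decay, while the subsequence lower bound $\Vol_f(A_\delta(\partial B_{r_k}))\geq e^{(\bar\mu_\delta-\epsilon)r_k}$ produces the needed denominator growth by concentrating the test function near $r=r_k$. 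In both cases, the resulting shell sums are geometric in $\epsilon$, so letting $k\to\infty$ followed by $\epsilon\to 0$ yields the Rayleigh quotient bounded by $\alpha^2$, giving the claim.
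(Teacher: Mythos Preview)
There is a genuine gap in the choice of test profile. With the single exponential $\psi_k(r)=\eta_k(r)\,e^{-\alpha r}$ and $\alpha=\mu_\delta/2$ (case (i), so $\alpha\ge 0$), neither the outer error nor the denominator behaves as you claim. For the outer piece, the subsequence bound $\Vol_f(A_\delta(\partial B_{r_k}))\le e^{(\mu_\delta+\epsilon)r_k}$ gives
\[
e^{-2\alpha r_k}\,\Vol_f(A_\delta(\partial B_{r_k}))\ \le\ e^{-\mu_\delta r_k}\,e^{(\mu_\delta+\epsilon)r_k}=e^{\epsilon r_k}\longrightarrow\infty,
\]
so it is not ``tamed''. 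If you shift to $\alpha=(\mu_\delta+2\epsilon)/2$ to fix this, the denominator can stay bounded: take $v_f(r)=e^{\mu_\delta r}/r^2$ (then $\mu_\delta(r)\to\mu_\delta$, $\Vol_f(M)=\infty$), and $\int e^{-2\alpha r}v_f(r)\,dr\le\int r^{-2}\,dr<\infty$. Since the inner-cutoff error on $[R,R+1]$ is a fixed positive constant, $E_k/\!\int\psi_k^2v_f$ does not tend to $0$. In case (ii) the same obstruction appears: your assertion that ``finite volume forces the outer-cutoff contribution to decay'' is not justified, because the weight $e^{-2\alpha r_k}=e^{-\bar\mu_\delta r_k}$ grows (as $\bar\mu_\delta\le 0$) and can overwhelm the decay of $\Vol_f(A_\delta(\partial B_{r_k}))$.

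The paper circumvents exactly this tension by using a \emph{two-parameter tent profile} $u=e^{h_j}\chi_r$, where $h_j(\rho)=\alpha\rho$ for $\rho\le j$ and $h_j(\rho)=\alpha(2j-\rho)$ for $\rho>j$, with the outer cutoff placed at some $r_n\gg j$ along the relevant subsequence. On $B_j$ the weight $e^{2h_j}\ge 1$, so the denominator dominates $\Vol_f(B_j\setminus\Omega)\to\infty$ (case (i)) or, in case (ii), dominates $\sum_{r_n+\delta\le j}e^{2\alpha r_n}\Vol_f(A_\delta(\partial B_{r_n}))\to\infty$ via the limsup lower bound. Simultaneously, at the outer cutoff the \emph{decaying} branch gives $e^{2h_j(r_n)}=e^{2\alpha(2j-r_n)}$, which for $r_n$ large relative to $j$ beats the annulus-volume upper bound and keeps that error $\le 1$. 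A single exponential rate cannot play both roles; introducing the second scale $j$ is the missing idea.
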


\begin{proof}
	For arbitrary compact domain $\Omega\subset M$, let $\lambda_1^f(M\setminus\Omega)$ be the bottom of the spectrum of $\Delta_f$ on $M\setminus\Omega$. It is well known that
	\begin{eqnarray*}
		\lambda_1^f(M\setminus\Omega)=\inf_{u\in C^{\infty}_c(M\setminus\Omega)}\frac{\displaystyle{\int_M}|\nabla u|^2e^{-f}d\sigma}{\displaystyle{\int_M}u^2e^{-f}d\sigma} 
	\end{eqnarray*}
	and, see \cite{BessaPigolaSetti2013},
	\begin{eqnarray*}
		\inf\sigma_{ess}(-\Delta_f)=\sup_{\Omega\subset M}\lambda_1^f(M\setminus\Omega).
	\end{eqnarray*}	
	Therefore it suffices to prove the following: for any fixed $\delta>0$ and compact domain $\Omega\subset M$, and for arbitrary sufficiently small $\varepsilon,\varepsilon_1>0,$ there exists a function $u$ with compact support in $M\setminus \Omega$ such that 
	\begin{eqnarray*}
		\frac{\displaystyle{\int_M}|\nabla u|^2e^{-f}d\sigma}{\displaystyle{\int_M}u^2e^{-f}d\sigma}<\alpha^2(\varepsilon)+\varepsilon_1, 
	\end{eqnarray*}
	where $\alpha^2(\varepsilon)\rightarrow\mu_{\delta}^2/4$ as $\varepsilon\rightarrow0$.	
	
	Let's establish a test function $u(x)=e^{h_j(x)}\cdot\chi_r(x)$ with compact support in $M\setminus \Omega$. To this, let $o\in M$ be a fixed point and let $\rho(x)=\rho(x,o)$ denote the distance from $o$ to $x\in M$. We define $\chi_r$, see Figure \ref{fig1}, and $h_j$, see Figure \ref{fig2}, as follows: for $r$ sufficiently large such that $\Omega\subset B_{r-\delta}$,	
	\begin{eqnarray*}
		\chi_r(x)&=&\left\{
		\begin{array}{ll}
			0,& {\rm if} \ x\in \Omega \ {\rm or} \ \rho(x)>r+\delta,\\
			\rho(x,\Omega)/\delta, & {\rm if} \ 0<\rho(x,\Omega)\leq\delta,\\
			1-\rho(x,B_r)/\delta, \ \ \ \ \ \ & {\rm if} \ r<\rho(x)\leq r+\delta,\\
			1, &  {\rm otherwise},
		\end{array}  
		\right.	
	\end{eqnarray*}
	
	\begin{figure}
		\resizebox{0.8\hsize}{!}{\includegraphics*{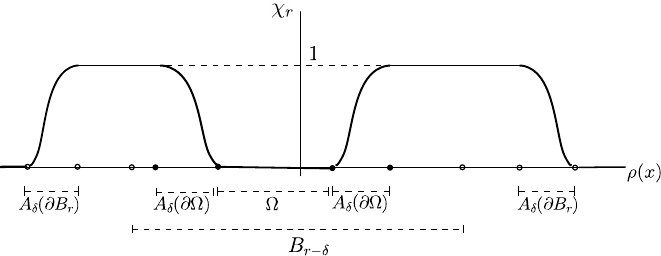}}
		\caption{The function $\chi_r$ with compact support in $M\setminus \Omega$}\label{fig1}
	\end{figure}
	
	\noindent
	and for a fixed number $\alpha\geq0$, and for a positive integer $j$,
	\begin{eqnarray*}
		h_j(x)&=&\left\{
		\begin{array}{ll}
			\alpha\rho(x),& {\rm if} \ \rho(x)\leq j,\\
			2\alpha j-\alpha\rho(x), \ \ \ & {\rm if} \ \rho(x)> j.
		\end{array}  
		\right.	
	\end{eqnarray*}	
	\begin{figure}
		\resizebox{0.6\hsize}{!}{\includegraphics*{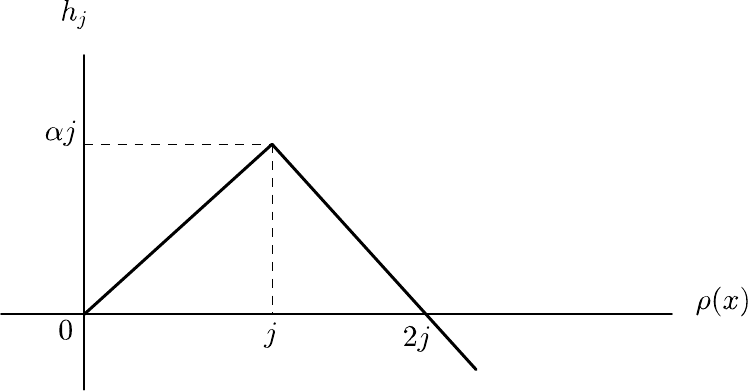}}
		\caption{The function $h_j$}\label{fig2}
	\end{figure}
	For $u=e^{h_j}\chi_r$, we have that
	$$\nabla u=e^{h_j}\nabla h_j\cdot\chi_r+e^{h_j}\nabla\chi_r$$
	and
	\begin{eqnarray*}\label{ee9}
		\int_{M\setminus\Omega}|\nabla u|^2e^{-f}d\sigma&=&\int_{M\setminus \Omega}e^{2h_j}(|\nabla h_j\cdot\chi_r+\nabla\chi_r|^2)e^{-f}d\sigma\nonumber\\
		&=&\int_{M\setminus \Omega}u^2|\nabla h_j|^2e^{-f}d\sigma+\int_{M\setminus \Omega}e^{2h_j}(2\chi_r\cdot\langle\nabla h_j,\nabla\chi_r\rangle+|\nabla \chi_r|^2)e^{-f}d\sigma.\nonumber\\
		&\leq&\int_{M\setminus \Omega}u^2|\nabla h_j|^2e^{-f}d\sigma+\int_{M\setminus \Omega}e^{2h_j}(2\chi_r\,|\nabla h_j|\,|\nabla\chi_r|+|\nabla \chi_r|^2)e^{-f}d\sigma.\nonumber\\
	\end{eqnarray*}
	Note that $\nabla\chi_r$ is supported in $A_{\delta}(\partial B_r)\cup A_{\delta}(\partial \Omega)$ and $|\nabla\chi_r|\leq1/\delta$. In addition, $|\nabla h_j|\leq\alpha$. Therefore, 
	\begin{eqnarray}
	\lefteqn{\int_{M\setminus\Omega}|\nabla u|^2e^{-f}d\sigma\leq\alpha^2\int_{M\setminus \Omega}u^2e^{-f}d\sigma}\\
	&&+(2\alpha/\delta+1/\delta^2)\left(\int_{A_{\delta}(\partial \Omega)}e^{2h_j}e^{-f}d\sigma+\int_{A_{\delta}(\partial B_r)}e^{2h_j}e^{-f}d\sigma\right).\nonumber
	\end{eqnarray} 
	For $j$ and $r$ sufficiently large such that $r>j$ and  $h_j(x)=\alpha\rho(x)$ for all  $x\in A_{\delta}(\partial \Omega)$, there exists a real constant $C$ independent of $r$ and $j$ such that
	\begin{equation}\label{ee10}
	(2\alpha/\delta+1/\delta^2)\int_{A_{\delta}(\partial \Omega)}e^{2h_j}e^{-f}d\sigma\leq C.
	\end{equation}
	
	(i) Assume that $\Vol_f(M)=+\infty$. Then
	\begin{equation}\label{ee11}
	\int_{M\setminus \Omega}u^2e^{-f}d\sigma=\int_{M\setminus \Omega}e^{2h_j}\chi_r^2e^{-f}d\sigma\rightarrow\infty \ \ \ \ {\rm as} \ \ \ r,j\rightarrow\infty.
	\end{equation}
	If $\mu_{\delta}\geq0,$ it follows from the definition of $\mu_{\delta}$ that, for any $\varepsilon>0,$ there exists a sequence $\{r_n\}$ with $r_n>2j(2+\mu_{\delta}/\varepsilon)$ for every $n$ such that
	$$
	\mu_{\delta}(r_n)=\frac{1}{r_n}\log \Vol_f(A_{\delta}(\partial B_{r_n}))\leq\mu_{\delta}+\varepsilon.
	$$
	By choosing $\alpha=\alpha(\varepsilon)=(\mu_{\delta}+2\varepsilon)/2$, we obtain
	$$h_j(x)=(2 j-\rho(x))\alpha\leq(2j-r_n)(\mu_{\delta}+2\varepsilon)/2$$
	for all $x\in A_{\delta}(\partial B_{r_n}),$
	and
	\begin{eqnarray}\label{ee13}
	\int_{A_{\delta}(\partial B_{r_n})}e^{2h_j}e^{-f}d\sigma&\leq& e^{(2j-r_n)(\mu_{\delta}+2\varepsilon)}e^{(\mu_{\delta}+\varepsilon)r_n}\nonumber\\
	&=& e^{2j(\mu_{\delta}+2\varepsilon)-r_n\varepsilon}\\
	&\leq& 1.\nonumber
	\end{eqnarray}
	Therefore, \ by (\ref{ee9}), (\ref{ee10}), (\ref{ee11}) and (\ref{ee13}), \ we can select $n$ and $j$ such that $u_n=e^{h_j}\chi_{r_n}$ and
	$$
	\frac{\displaystyle{\int_M}|\nabla u_n|^2e^{-f}d\sigma}{\displaystyle{\int_M}u_n^2e^{-f}d\sigma}\leq\alpha^2(\varepsilon)+\varepsilon_1
	$$
	for any $\varepsilon_1>0,$ where $\alpha=(\mu_{\delta}+2\varepsilon)/2$. Now, if $\mu_{\delta}<0$, then, for any $\varepsilon>0$ satisfying $\mu_{\delta+\varepsilon}<0,$ there exists a sequence $\{r_n\}$ such that $\mu_{\delta}(r_n)\leq\mu_{\delta}+\varepsilon.$ Setting $\alpha=0$ and $e^{h_j}={\bf 1},$ we have
	$$
	\int_{A_{\delta}(\partial B_{r_n})}e^{2h_j}e^{-f}d\sigma=\Vol_f(A_{\delta}(\partial B_{r_n}))\leq e^{(\mu_{\delta}+\varepsilon)r_n}<1.
	$$
	Hence, for any $\varepsilon_1,$ we can select $n$ such that 
	$$\frac{\displaystyle{\int_M}|\nabla u_n|e^{-f}d\sigma}{\displaystyle{\int_M} u_n^2e^{-f}d\sigma}\leq\varepsilon_1,$$
	where $u_n={\bf 1}\chi_{r_n}.$ 
	
	(ii) Now, assume $\Vol_f(M)<+\infty$. In this case,  $-\infty\leq\bar{\mu}_{\delta}\leq0;$ we can assume $-\infty<\bar{\mu}_{\delta}\leq0.$ It follows from the definition of $\bar{\mu}_{\delta}$ that, for any sufficiently small $\varepsilon>0,$ there exists a sequence $\{r_n\}$ such that $$r_n>2j(2\varepsilon-\bar \mu_{\delta})/(\varepsilon-2\bar \mu_{\delta})$$ and  
	$$
	\bar \mu_{\delta}-\varepsilon\leq\mu_{\delta}(r_n)=\frac{1}{r_n}\log \Vol_f(A_{\delta}(\partial B_{r_n}))\leq\bar \mu_{\delta}+\varepsilon. 
	$$ Here, we can assume that this sequence $\{r_n\}$ satisfies $r_{n+1}-r_n\geq\delta$. Choosing $\alpha=\alpha(\varepsilon)=-(\bar \mu_{\delta}-2\varepsilon)/2$ and $g(x)=e^{\alpha\rho(x)}$, we obtain
	\begin{eqnarray*}
		\int_{B_r}g^2e^{-f}d\sigma&\geq&\sum_{A(r)}\int_{B_{r_n+\delta}\setminus B_{r_n}}g^2e^{-f}d\sigma\geq\sum_{A(r)}e^{2\alpha r_n}\cdot \Vol_f(A_{\delta}(\partial B_{r_n}))\\
		&\geq&
		\sum_{A(r)}e^{2\alpha r_n}e^{(\bar\mu_{\delta}-\varepsilon)r_n}=\sum_{A(r)}e^{\varepsilon r_n}\rightarrow\infty \ \ \ \ \ {\rm as} \ \ r\rightarrow\infty,
	\end{eqnarray*}
	where $A(r)=\{n; \  r_n+\delta\leq r\};$ resulting 
	\begin{eqnarray*}
		\lefteqn{\int_{M\setminus \Omega} u^2e^{-f}d\sigma=\int_{M\setminus \Omega} e^{2h_j}\chi_r^2e^{-f}d\sigma}\\
		&\geq&\int_{B_j}g^2e^{-f}d\sigma-\int_{\Omega}g^2e^{-f}d\sigma\rightarrow\infty \ \ \ {\rm as} \ \ r,j\rightarrow\infty
	\end{eqnarray*}
	and
	$$
	\int_{A_{\delta}(\partial B_{r_n})} e^{2h_j}e^{-f}d\sigma\leq e^{(2j-r_n)(2\varepsilon-\bar \mu_{\delta})}e^{(\bar \mu_{\delta}+\varepsilon)r_n}\leq1.
	$$
	In the same way as in the case of infinite weighted volume, selecting sufficiently large $n$ and $j$, we obtain the desired estimate. \qed
\end{proof}

Observe that $\mu_{\delta_1}\leq\mu_{\delta_2}$ and $\bar\mu_{\delta_1}\leq\bar\mu_{\delta_2}$ if $\delta_1<\delta_2$. Thus, there exists the limits
$$\mu_0=\lim_{\delta\rightarrow0}\mu_{\delta} \ \ \ \ \ \ {\rm and} \ \ \ \ \ \ \ 
\bar\mu_0=\lim_{\delta\rightarrow0}\bar\mu_{\delta},$$
with $\mu_{\delta}$ defined in (\ref{a13}).

\begin{lemma}\label{al2} Let $M_f$ be a complete noncompact weighted manifold. Then $$\inf\sigma_{ess}(-\Delta_f)\leq\mu^2/4,$$ where $\mu=\max\{\mu_0,0\}$ if $\Vol_f(M)=+\infty$ and $\mu=\bar{\mu}_0$ if $\Vol_f(M)<+\infty$.	
\end{lemma}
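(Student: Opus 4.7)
The strategy is to exploit Lemma \ref{al1} for each fixed $\delta>0$ and then pass to the limit $\delta\rightarrow 0^+$, using the monotonicity observation $\mu_{\delta_1}\leq\mu_{\delta_2}$ and $\bar\mu_{\delta_1}\leq\bar\mu_{\delta_2}$ whenever $\delta_1<\delta_2$, stated immediately before the lemma. This monotonicity implies $\mu_0=\inf_{\delta>0}\mu_{\delta}$ and $\bar\mu_0=\inf_{\delta>0}\bar\mu_{\delta}$, so every $\mu_{\delta}$ (respectively $\bar\mu_{\delta}$) dominates the corresponding limit. No new test functions need to be constructed; Lemma \ref{al1} already provides the entire analytic input, and Lemma \ref{al2} only repackages it.

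For the infinite-volume case $\Vol_f(M)=+\infty$, I split according to the sign of $\mu_0$. If $\mu_0<0$, the monotonicity produces some $\delta^{\ast}>0$ with $\mu_{\delta^{\ast}}<0$, so the second assertion of Lemma \ref{al1}(i) yields $\inf\sigma_{ess}(-\Delta_f)=0$, which agrees with $\mu^2/4=0$ for $\mu=\max\{\mu_0,0\}=0$. If instead $\mu_0\geq 0$, then $\mu_{\delta}\geq\mu_0\geq 0$ for every $\delta>0$, so the first part of Lemma \ref{al1}(i) gives $\inf\sigma_{ess}(-\Delta_f)\leq\mu_{\delta}^2/4$; letting $\delta\rightarrow 0^+$ produces $\inf\sigma_{ess}(-\Delta_f)\leq\mu_0^2/4=\mu^2/4$.

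For the finite-volume case $\Vol_f(M)<+\infty$, observe that $\bar\mu_{\delta}\leq 0$ for every $\delta>0$ (a fact already used inside the proof of Lemma \ref{al1}(ii), since $A_{\delta}(\partial B_r)\subset M\setminus B_{r-\delta}$ and the complement shrinks). By monotonicity, $\bar\mu_0\leq\bar\mu_{\delta}\leq 0$ for all $\delta>0$, so squaring reverses the inequality and yields $\bar\mu_{\delta}^2\leq\bar\mu_0^2$. Combining this with Lemma \ref{al1}(ii) gives
\[
\inf\sigma_{ess}(-\Delta_f)\leq\frac{\bar\mu_{\delta}^2}{4}\leq\frac{\bar\mu_0^2}{4}=\frac{\mu^2}{4}
\]
for any $\delta>0$, as required.

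There is no substantial obstacle: the main step is just the careful bookkeeping of sign cases for $\mu_0$ in the infinite-volume setting, which is needed because Lemma \ref{al1}(i) distinguishes between $\mu_{\delta}\geq 0$ (giving the quadratic bound) and $\mu_{\delta}<0$ (forcing the spectrum to touch zero). The limit passage $\delta\rightarrow 0^+$ itself is automatic from the definitions $\mu_0=\lim_{\delta\rightarrow 0}\mu_{\delta}$ and $\bar\mu_0=\lim_{\delta\rightarrow 0}\bar\mu_{\delta}$.
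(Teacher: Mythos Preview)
Your proof is correct and follows essentially the same approach as the paper's own argument: apply Lemma~\ref{al1} for each $\delta>0$ and pass to the limit using the monotonicity of $\mu_\delta$ and $\bar\mu_\delta$, handling the sign of $\mu_0$ separately in the infinite-volume case. The paper's proof is more terse (it compresses your $\mu_0\geq 0$ limit passage into the phrase ``follows direct from Lemma~\ref{al1}''), but the logic is identical.
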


\begin{proof}
	It follows direct from Lemma \ref{al1} that $\inf\sigma_{ess}(-\Delta_f)\leq\mu_0^2/4$	if $M_f$ has infinite weighted volume and $\inf\sigma_{ess}(-\Delta_f)\leq\bar\mu_0^2/4$ if $M_f$ has finite weighted volume. Now, if $\Vol_f(M)=+\infty$ and $\mu_0=\lim_{\delta\rightarrow0}\mu_{\delta}<0,$ then there exists $\delta>0$ such that $\mu_{\delta}<0.$ Using Lemma \ref{al1} (i), we obtain $\inf\sigma_{ess}(-\Delta_f)=0.$
	This concludes the proof of this lemma.\qed	
\end{proof}	

We are going to use Lemma \ref{al1} and Lemma \ref{al2} to prove Theorem \ref{it4} that was enunciate in Introduction.	

\begin{proof}{\it of Theorem \ref{it4}} \  (i) Assume $\Vol_f(M)=+\infty$. Recall that
	$$\mu_{\delta}=\liminf_{r\rightarrow\infty}\frac{1}{r}\log \Vol_f(A_{\delta}(\partial B_r)) \ \ {\rm and} \ \ \ \mu_v=\liminf_{r\rightarrow\infty}\frac{1}{r}\log\Vol_f(B_r).$$ Since $\mu_{\delta_1}\leq\mu_{\delta_2}$ whenever $\delta_1<\delta_2,$ then $\mu_0=\lim_{\delta\rightarrow0}\mu_{\delta}(r)$ satisfies $\mu_0\leq\mu_{\delta}$. 
	For arbitrary fixed $\delta>0$, we can select $r$ sufficiently large such that
	$$\Vol_f(A_{\delta}(\partial B_r))\leq\Vol_f(B_r).$$
	Thus, $\mu_{\delta}\leq\mu_v$ and $\mu_0\leq\mu_{\delta}\leq\mu_v$ for all $\delta>0.$ If $\mu_0\geq0,$ then it follows from Lemma \ref{al2} that
	$$\inf\sigma_{ess}(-\Delta_f)\leq\frac{\mu_0^2}{4}\leq\frac{\mu_v^2}{4}.$$
	Moreover,  $\inf\sigma_{ess}(-\Delta_f)=0$ if $\mu_0<0$.
	
	(ii) Assume that $\Vol_f(M)<+\infty$. Recall that
	$$\bar\mu_{\delta}=\limsup_{r\rightarrow\infty}\frac{1}{r}\log \Vol_f(A_{\delta}(\partial B_r))$$
	and $$\mu_w=\liminf_{r\rightarrow\infty}\frac{-1}{r}\log(\Vol_f(M)-\Vol_f(B_r)).$$
	Since $\bar\mu_{\delta_1}\leq\bar\mu_{\delta_2}$ if $\delta_1<\delta_2,$ then  $\bar\mu_0=\lim_{\delta\rightarrow0}\bar\mu_{\delta},$ 
	satisfies 
	\begin{equation}\label{eee3}
	\bar\mu_0\leq\bar\mu_{\delta}\leq\lim_{r\rightarrow\infty}\sup\frac{1}{r}\log(\Vol_f(M)-\Vol_f(B_r))=-\mu_w\leq0.	
	\end{equation}
	The second inequality of the above expression follows from the inequality
	$$\Vol_f(A_{\delta}(\partial B_r))\leq\Vol_f(M)-\Vol_f(B_r).$$
	Let $\bar{\mu}_{\delta}<0.$ For any $\varepsilon>0$ satisfying $\bar \mu_{\delta}+\varepsilon<0,$ there exists $r_0$ such that  $$\frac{1}{r}\log \Vol_f(A_{\delta}(\partial B_r))<\bar \mu_{\delta}+\varepsilon$$
	for any $r\geq r_0.$ Then we get, for any $r\geq r_0,$
	$$
	\Vol_f(M)-\Vol_f(B_r)=\sum_{k=0}^{\infty}\Vol_f(A_{\delta}(\partial B_{r+k\delta}))\leq\sum_{k=0}^{\infty}e^{(\bar{\mu}_{\delta}+\varepsilon)(r+k\delta)}.
	$$
	Thus, we have 
	$$\frac{1}{r}\log(\Vol_f(M)-\Vol_f(B_r))\leq\bar{\mu}_{\delta}+\varepsilon-\frac{1}{r}\log(1-e^{(\bar{\mu}_{\delta}+\varepsilon)\delta}),$$
	resulting in $-\mu_w\leq\bar{\mu}_{\delta}+\varepsilon.$  Since we can select arbitrary small $\varepsilon>0,$ then $-\mu_w\leq\bar{\mu}_{\delta}.$ Therefore, by (\ref{eee3}), $-\mu_w=\bar \mu_{\delta}=\bar\mu_{0}.$ Besides on, $\bar \mu_{\delta}=-\mu_w=0$ if $\bar{\mu}_{\delta}=0$. Therefore, $|\bar{\mu}_{0}|={\mu}_{w}.$ This concludes the part (ii).
	
	Let  $M_f=(\R^{n},ds^2,e^{-f}d\sigma)$ with $$f=\frac{r}{2} \ \ \ \ {\rm and} \ \ \ \ ds^2=dr^2+g^2(r)d\theta^2$$
	such that $g:[0,+\infty)\rightarrow\R$ is a nonnegative smooth function satisfying
	$$
	g(0)=0, \ \ \ g'(0)=1, \ \ \ {\rm and} \ \ \ g(r)=e^{-\frac{r}{2(n-1)}} \ \ {\rm for \ all} \ r\geq r_0>0.
	$$
	Note that 
	\begin{eqnarray*}
		\Delta_fu&=&\Delta\left( e^{\frac{1 }{2}r}\right)-\frac{1}{2}\left\langle\nabla r,\nabla\left(e^{\frac{1}{2}r}\right)\right\rangle\\
		&=&\frac{1}{2}e^{\frac{1}{2}r}\Delta r+\frac{1}{4}e^{\frac{1}{2}r}|\nabla r|^2-\frac{1}{4}e^{\frac{1}{2}r}|\nabla r|^2\\
		&=&\frac{1}{2}e^{\frac{1}{2}r}\Delta r,
	\end{eqnarray*}
	where 
	$$\Delta r=({n-1})\frac{\frac{d}{dr}\left(e^{-\frac{r}{2(n-1)} }\right)}{e^{-\frac{r}{2(n-1)} }}=-\frac{1}{2}$$
	for all $r\geq r_0>0$. 
	Therefore, $u$ is a positive solution of
	$$\Delta_fu+\frac{1}{4}u=0$$
	for all $r\geq r_0>0.$ Since, see Corollary 6.4 of \cite{BessaPigolaSetti2013},
	$$\lambda_1^f(M\setminus B_r)\geq\inf_{M\setminus B_r}\left\{-\frac{\Delta_f u}{u}\right\},$$
	for any positive function, then
	\begin{equation}\label{e111}\lambda_1^f(M\setminus B_r)\geq\frac{1}{4},
	\end{equation}	
	for all $r\geq r_0>0.$ Using Remark \ref{ri1}, we have that 
	\begin{eqnarray*}
		\mu_w&=&\liminf_{r\rightarrow\infty}\frac{-1}{r}\log(\Vol_f(M)-\Vol_f(B_r))\\
		&=&\liminf_{r\rightarrow\infty}\frac{-1}{r}\log\left(\omega_ne^{-r}\right)\\
		&=&1.
	\end{eqnarray*}
	Thus, it follows from (\ref{e111}) and (\ref{1111}), and the first part of Theorem \ref{it4} that $$\frac{1}{4}\leq\lambda_1^f(M\setminus B_r)\leq\inf \sigma_{ess}(-\Delta_f)\leq\frac{1}{4}=\frac{\mu_w^2}{4}.$$
	This concludes the proof of Theorem \ref{it4}.	\qed	
\end{proof}

We are interested in estimating $\lambda^f_1(M\setminus\Omega)$, where $\Omega$ is an arbitrary compact subset of $M$. For this, we will look at the oscillatory behaviour of solutions of one second order ordinary differential equation. We will need the following result obtained by Fite (see \cite{Fite1918}, Theorem I):

\begin{theorem}[\cite{Fite1918}] \label{t5} Assume that $q$ and $\lambda$ are continuous functions of $t\in[t_0,+\infty)$ such that $|q(t)|\leq \alpha$ and $\lambda(t)\geq h>0$ for every $t\geq t_0$, where $\alpha$ and $h$ are constants such that $4h-\alpha^2>0$, then every solution of differential equation
	\begin{equation}\label{e5}
	y''+qy'+\lambda y=0
	\end{equation}
	changes sign an infinite number of times, i.e., every solution of {\normalfont (\ref{e5})} is oscillatory. 
\end{theorem}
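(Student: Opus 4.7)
The plan is to argue by contradiction using a Riccati transformation, which has the advantage of requiring no differentiability of $q$. Suppose a nontrivial solution $y$ has only finitely many zeros on $[t_0,+\infty)$. Uniqueness for second order linear ODEs forces every zero of $y$ to be simple, so zeros and sign changes coincide, and one may assume $y(t)>0$ on some interval $[T,+\infty)$. On this interval the logarithmic derivative $w(t):=y'(t)/y(t)$ is $C^1$, and substituting $y''=-qy'-\lambda y$ gives the Riccati identity
\begin{equation*}
w'=-w^2-qw-\lambda=-\bigl(w+\tfrac{q}{2}\bigr)^2+\bigl(\tfrac{q^2}{4}-\lambda\bigr).
\end{equation*}

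Setting $\beta:=h-\alpha^2/4>0$ (this is exactly the hypothesis $4h-\alpha^2>0$), the bounds $|q|\le\alpha$ and $\lambda\ge h$ give $q^2/4-\lambda\le-\beta$ pointwise, whence $w'\le-\beta<0$ on $[T,+\infty)$. Integrating, $w(t)\to-\infty$ at least linearly fast; eventually $|w|\ge\alpha$, and then $|w+q/2|\ge|w|-\alpha/2\ge|w|/2$, so $(w+q/2)^2\ge w^2/4$ and the Riccati inequality sharpens to $w'\le-w^2/4$.

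The concluding observation is that the scalar inequality $w'\le-w^2/4$ forces $w$ to reach $-\infty$ in finite time: letting $v=-w>0$, one gets $v'\ge v^2/4$, hence $d(1/v)/dt\le-1/4$, so $v$ explodes at some $t^\star<+\infty$. But $w=y'/y$ stays finite wherever $y>0$, contradicting $y(t)>0$ on $[T,+\infty)$. The same argument applied to $-y$ excludes eventually negative solutions, so any nontrivial $y$ must change sign infinitely often.

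The delicate step is the finite-time blow-up argument: one must choose $T_1\ge T$ with $|w(T_1)|\ge\alpha$ and compare with the blow-up ODE $v'=v^2/4$ over a bounded interval, ensuring that the explosion time actually lies inside $[T,+\infty)$. Everything else reduces to algebra and to the strict positivity of $\beta$, which is precisely what converts the completed-square Riccati identity into a definite decay estimate for $w$.
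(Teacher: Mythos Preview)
Your argument is correct. The paper itself does not prove this statement: it is quoted verbatim as Theorem~I of Fite~\cite{Fite1918} and used as a black box in the proof of Theorem~\ref{it1}, so there is no ``paper's own proof'' to compare against.

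That said, your Riccati approach is a clean and self-contained way to establish the result. The steps all check out: on any ray where $y>0$ the logarithmic derivative $w=y'/y$ satisfies $w'=-(w+q/2)^2+(q^2/4-\lambda)\le-\beta$ with $\beta=h-\alpha^2/4>0$, so $w\to-\infty$; once $|w|\ge\alpha$ the triangle inequality gives $|w+q/2|\ge|w|-\alpha/2\ge|w|/2$, hence $w'\le-w^2/4$; and the standard comparison with $v'=v^2/4$ forces blow-up by time $T_1+4/v(T_1)<\infty$, contradicting the global positivity of $y$. Since linear equations with continuous coefficients have solutions defined on all of $[t_0,+\infty)$, the blow-up genuinely lands inside the domain, and the contradiction is complete.
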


\begin{remark}
	The oscillation criteria in the integral form was firstly used by do Carmo and Zhou \cite{doCarmoZhou1999} to obtain estimates of $\lambda_1(M\setminus\Omega)$ of the Laplacian operator supposing polynomial or exponential growth of volume without weight. Other authors have also used this criterion to estimating the first eigenvalue of other elliptical operators assuming volume conditions, for example, see \cite{BianchiniMariRigoli2009}, \cite{Elbert2002}, and \cite{ImperaRimoldi2015}. In this last reference, Impera and Rimoldi have given an upper estimates of $\lambda_1^{f}(M\setminus\Omega)$ of the weighted Laplacian operator supposing polynomial or exponential growth of weighted volume and infinite weighted volume. 
\end{remark}

Now, we can use the previously oscillation result to prove Theorem \ref{it1}.

\begin{proof}{\it of Theorem \ref{it1}}
	Let $v(t)=\vol_f(\partial B_{t})$ denote the weighted area of geodesic sphere $\partial B_t$ with center in $o\in M$ and radius $t>0$. Then, by coarea formula, we have
	$$\Vol_f(B_r)=\int_{B_r}e^{-f}d\sigma=\int_0^r\int_{\partial B_t}e^{-f}dA \,dt=\int_{0}^{r}v(t)dt.$$
	Choose $t_0\geq r_0$ such that $\Omega\subset B_{t_0}$. Put $q(r):=\displaystyle{\frac{v'(r)}{v(r)}}, \ r\geq t_0$. Since by hypothesis 
	$$|q(r)|=\left|\frac{d}{dr}\left(\log v(r)\right)\right|\leq \alpha$$
	for all $r\geq t_0$, we obtain that the differential equation
	\begin{equation}\label{ee7}
	y''(r)+\frac{v'(r)}{v(r)}y'(r)+\lambda y(r)=0, \ \ \ \ r\geq t_0,
	\end{equation}
	satisfies the condition of Theorem \ref{t5}. Therefore, for any $\lambda>0$ such that $$4\lambda-\alpha^2>0,$$ the differential equation (\ref{ee7}) is oscillatory, i.e., every solution $y(r)$, $r\in[t_0,+\infty)$, of (\ref{ee7}) changes sign an infinite number of times. Fixed the initial values $y(t_0)=y_0$ and $y'(t_0)=y_0'$, we have that any solution of (\ref{ee7}) can be extended to $[t_0,+\infty)$. Let $y:[t_0,+\infty)\rightarrow\R$ a non-trivial oscillatory solution of (\ref{ee7}) with $v(r)=\vol_f(\partial B_r)$. Thus, there exists two numbers $r_1$ and $r_2$ in $[t_0,+\infty)$ such that $r_1<r_2$ and $y(r_1)=y(r_2)=0$, and $y(t)\neq0$ for any $t\in(r_1,r_2)$. Write $r(x)=dist(x,o)$, $\varphi(x)=y(r(x))$ and $\Omega_{\lambda}=B_{r_2}\setminus B_{r_1}$. It follows by coarea formula that
	\begin{eqnarray*}
		0\leq\lambda_1^f(M\setminus\Omega)&\leq&\lambda_1^f(\Omega_{\lambda})\leq\frac{\displaystyle{\int_{\Omega_{\lambda}}}|\nabla\varphi|^2e^{-f}d\sigma}{\displaystyle{\int_{\Omega_{\lambda}}}|\varphi|^2e^{-f}d\sigma}
		=\frac{\displaystyle{\int_{r_1}^{r_2}}(y'(t))^2v(t)\,dt}{\displaystyle{\int_{r_1}^{r_2}}(y(t))^2v(t)\,dt}.\\
	\end{eqnarray*}
	However, by equation (\ref{ee7}),
	\begin{eqnarray*}(yvy')'=(y')^2v+yv'y'+yvy''=(y')^2v+\left(y''+\frac{v'}{v}y'\right)yv=(y')^2v-\lambda y^2v,
	\end{eqnarray*}
	resulting
	$$0\leq\lambda_1^f(M\setminus\Omega)\leq\frac{\displaystyle{\int_{r_1}^{r_2}}\lambda(y(t))^2v(t)\,dt}{\displaystyle{\int_{r_1}^{r_2}}(y(t))^2v(t)\,dt}=\lambda.$$ Since $\lambda$ is an arbitrary positive constant larger than $\displaystyle{\frac{\alpha^2}{4}}$, then $$\lambda_1^f(M\setminus\Omega)\leq\displaystyle{\frac{\alpha^2}{4}},$$
	by implying, see (\ref{1111}), that
	$$\inf\sigma_{ess}(-\Delta_f)\leq\displaystyle{\frac{\alpha^2}{4}}.$$
	
	Now, let $M_f=(\R^{n},ds^2,e^{-f}d\sigma$) with $$f=\frac{\alpha r}{2} \ \ \ \ {\rm and} \ \ \ \ ds^2=dr^2+g^2(r)d\theta^2$$
	such that $g:[0,+\infty)\rightarrow\R$ is a nonnegative smooth function satisfying
	$$
	g(0)=0, \ \ \ g'(0)=1, \ \ \ {\rm and} \ \ \ g(r)=e^{-\frac{\alpha}{2(n-1)} r} \ \ {\rm for \ all} \ r\geq r_0>0.
	$$ 
	Now, we consider the function $u=e^{\frac{\alpha}{2}r}$. Note that 
	\begin{eqnarray*}
		\Delta_fu&=&\Delta\left( e^{\frac{\alpha }{2}r}\right)-\frac{\alpha}{2}\left\langle\nabla r,\nabla\left(e^{\frac{\alpha}{2}r}\right)\right\rangle\\
		&=&\frac{\alpha}{2}e^{\frac{\alpha}{2}r}\Delta r+\frac{\alpha^2}{4}e^{\frac{\alpha}{2}r}|\nabla r|^2-\frac{\alpha^2}{4}e^{\frac{\alpha}{2}r}|\nabla r|^2\\
		&=&\frac{\alpha}{2}e^{\frac{\alpha}{2}r}\Delta r,
	\end{eqnarray*}
	where 
	$$\Delta r=({n-1})\frac{\frac{d}{dr}\left(e^{-\frac{\alpha}{2(n-1)} r}\right)}{e^{-\frac{\alpha}{2(n-1)} r}}=-\frac{\alpha}{2}$$
	for all $r\geq r_0>0$. Therefore, $u$ is a positive solution of
	$$\Delta_fu+\frac{\alpha^2}{4}u=0$$
	for all $r\geq r_0>0.$
	Since, see Corollary 6.4 of \cite{BessaPigolaSetti2013},
	$$\lambda_1^f(M\setminus B_r)\geq\inf_{M\setminus B_r}\left\{-\frac{\Delta_f u}{u}\right\},$$
	for any positive function, then
	$$\lambda_1^f(M\setminus B_r)\geq\frac{\alpha^2}{4}.
	$$	
	Therefore,
	\begin{equation}\label{e11}\lambda_1^f(M\setminus\Omega)\geq\lambda_1^f(M\setminus B_{r_0})\geq\frac{\alpha^2}{4}
	\end{equation}
	for any compact $\Omega\supset B_{r_0}.$ Using Remark \ref{ri1}, we have that 
	$$\left|\frac{d}{dr}\left(\log\vol_f(\partial B_r)\right)\right|=|-\alpha|=\alpha \ \ \ {\rm for \ all} \ r\geq r_0>0.$$
	Thus, it follows from Theorem \ref{it1} that $$\lambda_1(M\setminus \Omega)\leq \frac{\alpha ^2}{4}.$$ Using inequalities (\ref{e11}) along with the above inequality and equality (\ref{1111}), we conclude that $$\lambda_1(M\setminus \Omega)=\inf\sigma_{ess}(-\Delta_f)=\frac{\alpha^2}{4}$$
	for any compact $\Omega\supset B_{r_0}.$ \qed
\end{proof}

\begin{example}\label{ri3} There is a class of complete Riemannian manifolds that satisfies the conditions of Theorem \ref{it1}. In fact, let $M_f^{n}=(\R^{n},ds^2,e^{-f}d\sigma)$ with weight $f=f(r)$ and a smooth metric $ds^2=dr^2+g^2(r)d\theta^2$, where $d\theta^2$ denotes the standard metric on $(n-1)$-dimensional unit sphere $\s^{n-1}_1$ and $g:[0,+\infty)\rightarrow\R$ is a nonnegative smooth function satisfying $g(0)=0$, $g'(0)=1$, and
	$$\left|(n-1)\frac{g'(r)}{g(r)}-f'(r)\right|\leq\alpha, \ \ \ \ \alpha>0$$
	for all $r\geq r_0>0.$ Thus, 
	\begin{equation}\label{er1}
	\Vol_f(B_r)=\int_0^r\int_{\s^{n-1}_1}(g(t))^{n-1}e^{-f(t)}d\theta dt.
	\end{equation}
	and
	$$\vol_f(\partial B_r)=\omega_{n}(g(r))^{n-1}e^{-f(r)},$$ 
	where $\omega_n$ is $({n-1})$-dimensional volume of $\s^{n-1}_1$. Therefore,  
	\begin{eqnarray}\label{er2}
	\frac{d}{dr}\log(\vol_f(\partial B_r))&=&\frac{d}{dr}\log\left(\omega_n(g(r))^{n-1}e^{-f(r)}\right)\nonumber\\
	&=&(n-1)\frac{g'(r)}{g(r)}-f'(r)
	\end{eqnarray}
	for all $r\geq r_0.$	
\end{example}

\section{Estimates for the Weighted Mean Curvature}

Let ${\overline M}_f^{n+1}$ be a weighted manifold, i.e., $${\overline M}_f^{n}=( \overline M^{n},\langle\,,\,\rangle,e^{-f}d\mu).$$ Let $x:M^n\rightarrow\overline M^{n+1}_f$ be an isometric immersion of a Riemannian orientable manifold $M^n$ into weighted manifold $\overline M^{n+1}_f.$ The function $f:\overline M \rightarrow\R$  restricted to $M$ induces a weighted measure $e^{-f}d\sigma$ on $M$. Thus, we have an induced weighted manifold $M_f^n=(M,\langle\,,\,\rangle,e^{-f}d\sigma)$.

The {\it second fundamental form} $A$ of $x$ is defined by
$$A(X,Y)=(\overline\nabla_XY)^{\perp}, \ \ \ \ \ \ X,Y\in T_pM, \ \ p\in M,$$
where $\perp$ denotes the projection above the normal bundle of $M$. 

The {\it weighted mean curvature vector} of $M$ is defined by
$${\bf H}_f={\bf H}+(\overline\nabla f)^{\perp}$$
with ${\bf H}=\tr A.$ The hypersurface $M$ is called {\it $f$-minimal} when its weighted mean curvature vector ${\bf H}_f$ vanishes identically; when there exists real constant $C$ such that ${\bf H}_f=-C\eta$ with $\eta$ being unit normal vector field, we say the hypersurface $M$ has {\it constant weighted mean curvature}. 

Let $F:(-\varepsilon,\varepsilon)\times M\rightarrow \overline M_f$, $F_f(p)=F(t,p)$ for all $t\in(-\varepsilon,\varepsilon)$ and $p\in M,$ be a variation of the immersion $x$ associated with the normal vector field $u\eta$, where $u\in C^{\infty}_c(M).$  The corresponding variation of the {\it functional weighted area} $\mathcal A_f(t)=\Vol_f(F_t(M))$ satisfies 
\begin{equation} \label{ie1}
\mathcal A_f'(0)=\int_M H_f ue^{-f}d\sigma,	
\end{equation}
where $H_f$ is such that ${\bf H}_f=-H_f\eta$. The expression (\ref{ie1}) is known as {\it first variation formula}. 

\begin{remark}
	In \cite{McGR2015}, McGonagle and Ross obtained the first variation formula to hypersurface $M^{n}\subset \R^{n+1}_f,$ i.e, in the case where $\overline M_f=\R^{n+1}_f$ (see \cite{McGR2015}, (2.1), pp. 282). To the general case, we can proceed in the same way as \cite{McGR2015}.  	
\end{remark}	

The $f$-minimal hypersurfaces are critical points of the functional weighted area. Yet, the hypersurfaces with constant weighted mean curvature can be viewed as critical points of the functional weighted area restricted to variations which preserve the {\it enclose weighted volume,} i.e., to functions $u\in C^{\infty}_c(M)$ which satisfy the additional condition $$\int_Mue^{-f}d\sigma=0.$$ For such critical points, the {\it second variation} of the functional weighted area is given by
$$\mathcal A_f''(0)=-\int_M\left(u\Delta_fu+\left(|A|^2+\overline{\Ric_f}(\eta,\eta)\right)u^2\right)d\sigma,$$
where $\overline{\Ric}_f$ is the Bakry-\' Emery Ricci curvature and $A$ is the second fundamental form. 

\begin{remark}
	The second variation formula of functional weighted area, $\mathcal A_f''(0)$, can be viewed in \cite{McGR2015}, Lemma 2.3, to isometric immersion of hypersurface $M^n$ into $\R^{n+1}_f$ with constant weighted mean curvature. The case which $\overline M^{n+1}_f$ is any weighted manifold, it follows from analogue arguments shown in the proof of Lemma 2.3, \cite{McGR2015}. When $f$ is a constant function, the first and second variation formula were given by Barbosa and do Carmo \cite{BarbosadoCarmo1984} and Barbosa, do Carmo and Eschenburg \cite{BarbosadoCarmoEschenburg1988}.    
\end{remark}

The operator 
$$L_f=\Delta_f+|A|^2+\overline{\Ric_f}(\eta,\eta)$$
is called the {\it $f$-stability operator} of the immersion $x$. In the $f$-minimal case, the $f$-stability operator is viewed as acting on $\mathcal F=C_c^{\infty}(M)$; in the case of the hypersurfaces with constant weighted mean curvature, the $f$-stability operator is viewed as acting on $$\mathcal F=C_c^{\infty}(M)\cap \left\{u\in C^{\infty}_c(M); \ \int_Mue^{-f}d\sigma=0\right\}.$$
Associated with $L_f$ is the quadratic form
$$I_f(u,u)=-\int_MuL_fue^{-f}d\sigma.$$
For each compact domain $\Omega\subset M$, define the index, $\Ind_f\Omega$, of $L_f$ in $\Omega$ as the maximal dimension of a subspace of $\mathcal F$ where $I_f$ is a negative definite. The {\it index}, $\Ind_fM$, of $L_f$ in $M$ (or simply, the index of $M$) is then defined by
$$\Ind_fM=\sup_{\Omega\subset M}\Ind_f\Omega,$$
where the supreme is taken over all compact domains $\Omega\subset M.$  For more details, see \cite{ChengZhou2015}.

\begin{remark} In the case of constant weighted mean curvature immersion, if we analyze the index of $L_f$ on $M$, with $L_f$ acting on $C_c^{\infty}(M)$ instant of $C_c^{\infty}(M)\cap \left\{u\in C_c^{\infty}(M); \ \int_Mue^{-f}d\sigma=0\right\}$, we denote the {\it stronger index} of $L_f$ on $M$ by $\ind_fM$. \ However, it is easy to verify that $Ind_f(M)<\infty$ is equivalent to $ind_{f}(M)<\infty,$ so in the statement of Theorem \ref{it5} and Theorem \ref{it3} along with their respective corollaries, they are immaterial whether one takes $\Ind_fM<\infty$ or $ind_fM<\infty.$
\end{remark}

We will demonstrate Theorem \ref{it5} and Theorem \ref{it3}. To this, it is important to know the following inequality.  

\begin{lemma}\label{l1} Let $m$ be a real number such that $m<-1$ or $m>0$. Then 
	$$(a+b)^2\geq\frac{a^2}{1+m}-\frac{b^2}{m}$$
	for all $a,b\in \R$.	
\end{lemma}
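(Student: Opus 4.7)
The plan is to reduce the inequality to a perfect square identity after clearing denominators. Expanding $(a+b)^2 = a^2+2ab+b^2$ and moving everything to one side, the claim $(a+b)^2 \geq \frac{a^2}{1+m}-\frac{b^2}{m}$ is equivalent to
$$
\left(1 - \frac{1}{1+m}\right) a^2 + 2ab + \left(1 + \frac{1}{m}\right) b^2 \;\geq\; 0,
$$
which simplifies to
$$
\frac{m}{1+m}\, a^2 \;+\; 2ab \;+\; \frac{1+m}{m}\, b^2 \;\geq\; 0.
$$

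The next step is to observe that the hypothesis on $m$ (namely $m<-1$ or $m>0$) is exactly what guarantees that the coefficients $\frac{m}{1+m}$ and $\frac{1+m}{m}$ are both strictly positive: if $m>0$ then both numerators and denominators are positive, and if $m<-1$ then $m<0$ and $1+m<0$, so both ratios are positive again. (The excluded range $-1 \leq m \leq 0$ is precisely where either a denominator vanishes or one of the ratios becomes negative.)

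With positivity of the coefficients in hand, I would complete the square using that $\frac{m}{1+m} \cdot \frac{1+m}{m} = 1$, writing
$$
\frac{m}{1+m}\, a^2 + 2ab + \frac{1+m}{m}\, b^2 \;=\; \left(\sqrt{\tfrac{m}{1+m}}\, a + \sqrt{\tfrac{1+m}{m}}\, b\right)^{\!2} \;\geq\; 0,
$$
which is manifestly nonnegative. Reversing the algebraic manipulation yields the desired inequality. I do not anticipate any real obstacle here; the only subtlety is verifying that the sign conditions on $m$ are indeed what makes the square root representation legitimate, and this is a straightforward case check.
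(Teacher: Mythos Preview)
Your proof is correct and is essentially the same argument as the paper's: both reduce the inequality to the nonnegativity of the perfect square $\bigl(\sqrt{\tfrac{m}{1+m}}\,a+\sqrt{\tfrac{1+m}{m}}\,b\bigr)^{2}$, with the paper writing this as $\bigl(\tfrac{1}{k}a+kb\bigr)^{2}\geq 0$ for $k=\sqrt{\tfrac{1+m}{m}}$ and expanding forward, while you work backward from the claim. Your explicit verification that the sign hypothesis on $m$ legitimizes the square roots is a small clarification the paper leaves implicit.
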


\begin{proof}
	Initially, note that
	$$\left(\frac{1}{k}a+kb\right)^2\geq0, \ \ \ \ \ {\rm where} \ \ k=\sqrt{\frac{1+m}{m}}.$$
	Therefore,
	$$0\leq \frac{1}{k^2}a^2+2ab+k^2b^2=\left(\frac{1}{k^2}-1\right)a^2+(k^2-1)b^2+(a+b)^2,$$
	this is
	$$(a+b)^2\geq\left(1-\frac{1}{k^2}\right)a^2+(1-k^2)b^2=\frac{a^2}{1+m}-\frac{b^2}{m}.$$ \qed
\end{proof}

\begin{proof}{\it of Theorem \ref{it5}} 
	Since  $\ind_fM<\infty$ by hypothesis, then using Proposition 5 of \cite{ImperaRimoldi2015}, there exists a compact set $\Omega$ and a positive function $u$ on $M$ such that 
	$$0=L_fu=\Delta_fu+|A|^2u+\overline{\Ric}_f(\eta,\eta)u$$
	on $M\setminus \Omega$. Let $o\in M$ and let $r_0>0$ be such that $\Omega\subset B_{r_0}(o).$ Therefore, it follows from equality (\ref{1111}) and Theorem \ref{it4}, $$\lambda_1^f(M\setminus B_{r})\leq\inf\sigma_{ess}(-\Delta_f)\leq\frac{\mu^2}{4},$$ 
	where $\mu=\mu_v$ if $\Vol_f(M)=+\infty$ and $\mu=\mu_w$ if $\Vol_f(M)<+\infty$. Now, by using Corollary 6.4 of \cite{BessaPigolaSetti2013} and the previously inequality, we obtain
	\begin{eqnarray*}
		\frac{\mu^2}{4}&\geq&\lambda_1^f(M\setminus B_{r})\geq\inf_{M\setminus B_{r}}\left(-\frac{\Delta_fu}{u}\right)\geq\inf_{M\setminus B_{r}}\left\{\frac{H^2}{n}+\overline{\Ric}_f(\eta,\eta)\right\},
	\end{eqnarray*} 
	because $|A|^2\geq\displaystyle{\frac{H^2}{n}}$.
	Let $m>0$ and since $H_f=H-\langle\overline\nabla f,\eta\rangle=C,$ then it follows from Lemma \ref{l1} that
	$$H^2=((H_f)+\langle\overline\nabla f,\eta\rangle)^2\geq\frac{H_f^2}{1+m}-\frac{\langle\overline\nabla f,\eta\rangle^2}{m}$$
	and
	$$H^2=((H_f)+\langle\overline\nabla f,\eta\rangle)^2\geq\frac{\langle\overline\nabla f,\eta\rangle^2}{1+m}-\frac{H_f^2}{m};$$ resulting in	
	\begin{eqnarray*}
		\frac{\mu^2}{4}&\geq&\lambda_1^f(M\setminus B_{r})\geq\inf_{M\setminus B_{r}}\left\{\frac{H_f^2}{n(1+m)}-\frac{\langle\overline\nabla f,\eta\rangle^2}{nm}  +\overline{\Ric}_f(\eta,\eta)\right\}
	\end{eqnarray*} 
	and
	\begin{eqnarray*}
		\frac{\mu^2}{4}&\geq&\lambda_1^f(M\setminus B_{r})\geq\inf_{M\setminus B_{r}}\left\{\frac{\langle\overline\nabla f,\eta\rangle^2}{n(1+m)}-\frac{H_f^2}{nm}+\overline{\Ric}_f(\eta,\eta)\right\}.
	\end{eqnarray*} 
	Therefore,
	$$\frac{H^2_f}{nm}\geq-\frac{\mu^2}{4}+\inf_{M\setminus B_{r}}\left\{\overline{\Ric}_f(\eta,\eta)+\frac{\langle\overline\nabla f,\eta\rangle^2}{n(1+m)}\right\}$$
	and
	$$\frac{H^2_f}{n(1+m)}\leq\frac{\mu^2}{4}-\inf_{M\setminus B_{r}}\left\{\overline{\Ric}_f^{nm}(\eta,\eta)\right\}.$$ 
	The second part of this theorem is immediate, just use the previous inequality by doing $$\overline{\Ric}_f^{nm}(\eta,\eta)\geq\frac{\mu^2}{4}.$$ \qed
\end{proof}

\begin{proof}{\it of Theorem \ref{it3}}  Since $\ind_fM<\infty$ by hypothesis, then using Proposition 5 of \cite{ImperaRimoldi2015}, there exists a compact set $\Omega$ and a positive function $u$ on $M$ such that 
	$$0=L_fu=\Delta_fu+|A|^2u+\overline{\Ric}_f(\eta,\eta)u$$
	on $M\setminus \Omega$. Let $o\in M$ and let $r_0>0$ be such that $\Omega\subset B_{r_0}(o).$ Therefore, it follows from Theorem \ref{it1} that $$\lambda_1^f(M\setminus B_{r})\leq\inf\sigma_{ess}(-\Delta_f)\leq\frac{\alpha^2}{4}.$$ Now, by using Corollary 6.4 of \cite{BessaPigolaSetti2013} and previously inequality, we obtain
	\begin{eqnarray*}
		\frac{\alpha^2}{4}&\geq&\lambda_1^f(M\setminus B_{r})\geq\inf_{M\setminus B_{r}}\left(-\frac{\Delta_fu}{u}\right)\geq\inf_{M\setminus B_{r}}\left\{\frac{H^2}{n}+\overline{\Ric}_f(\eta,\eta)\right\},
	\end{eqnarray*} 
	because $|A|^2\geq\displaystyle{\frac{H^2}{n}}$.
	Let $m>0$ and since $H_f=H-\langle\overline\nabla f,\eta\rangle=C,$ then it follows from Lemma \ref{l1} that
	$$H^2=((H_f)+\langle\overline\nabla f,\eta\rangle)^2\geq\frac{H_f^2}{1+m}-\frac{\langle\overline\nabla f,\eta\rangle^2}{m}$$
	and
	$$H^2=((H_f)+\langle\overline\nabla f,\eta\rangle)^2\geq\frac{\langle\overline\nabla f,\eta\rangle^2}{1+m}-\frac{H_f^2}{m};$$
	resulting in	
	\begin{eqnarray*}
		\frac{\alpha^2}{4}&\geq&\lambda_1^f(M\setminus B_{r})\geq\inf_{M\setminus B_{r}}\left\{\frac{H_f^2}{n(1+m)}-\frac{\langle\overline\nabla f,\eta\rangle^2}{nm}  +\overline{\Ric}_f(\eta,\eta)\right\}
	\end{eqnarray*} 
	and
	\begin{eqnarray*}
		\frac{\alpha^2}{4}&\geq&\lambda_1^f(M\setminus B_{r})\geq\inf_{M\setminus B_{r}}\left\{\frac{\langle\overline\nabla f,\eta\rangle^2}{n(1+m)}-\frac{H_f^2}{nm}+\overline{\Ric}_f(\eta,\eta)\right\}.
	\end{eqnarray*} 
	Therefore,
	$$\frac{H^2_f}{nm}\geq-\frac{\alpha^2}{4}+\inf_{M\setminus B_{r}}\left\{\overline{\Ric}_f(\eta,\eta)+\frac{\langle\overline\nabla f,\eta\rangle^2}{n(1+m)}\right\}$$
	and
	$$\frac{H^2_f}{n(1+m)}\leq\frac{\alpha^2}{4}-\inf_{M\setminus B_{r}}\left\{\overline{\Ric}_f^{nm}(\eta,\eta)\right\}.$$ 
	The second part of this theorem is immediate, just use the previous inequality by doing $$\overline{\Ric}_f^{nm}(\eta,\eta)\geq\frac{\alpha^2}{4}.$$ \qed	
\end{proof}

\begin{bibdiv}
	\begin{biblist}
		
		\bib{AlencardoCarmo1993}{article}{
			author={Alencar, Hilário},
			author={do Carmo, Manfredo},
			title={Hypersurfaces of constant mean curvature with finite index and
				volume of polynomial growth},
			journal={Arch. Math. (Basel)},
			volume={60},
			date={1993},
			number={5},
			pages={489--493},
			issn={0003-889X},
			review={\MR{1213521}},
			doi={10.1007/BF01202317},
		}
		
		\bib{BakryEmery1985}{article}{
			author={Bakry, D.},
			author={{\'E}mery, Michel},
			title={Diffusions hypercontractives},
			language={French},
			conference={
				title={S\'eminaire de probabilit\'es, XIX, 1983/84},
			},
			book={
				series={Lecture Notes in Math.},
				volume={1123},
				publisher={Springer, Berlin},
			},
			date={1985},
			pages={177--206},
			review={\MR{889476}},
			doi={10.1007/BFb0075847},
		}
		
		\bib{BarbosadoCarmo1984}{article}{
			author={Barbosa, Jo{\~a}o Lucas},
			author={do Carmo, Manfredo},
			title={Stability of hypersurfaces with constant mean curvature},
			journal={Math. Z.},
			volume={185},
			date={1984},
			number={3},
			pages={339--353},
			issn={0025-5874},
			review={\MR{731682}},
			doi={10.1007/BF01215045},
		}
		
		\bib{BarbosadoCarmoEschenburg1988}{article}{
			author={Barbosa, J. Lucas},
			author={do Carmo, Manfredo},
			author={Eschenburg, Jost},
			title={Stability of hypersurfaces of constant mean curvature in
				Riemannian manifolds},
			journal={Math. Z.},
			volume={197},
			date={1988},
			number={1},
			pages={123--138},
			issn={0025-5874},
			review={\MR{917854}},
			doi={10.1007/BF01161634},
		}
		
		\bib{BessaPigolaSetti2013}{article}{
			author={Bessa, G. Pacelli},
			author={Pigola, Stefano},
			author={Setti, Alberto},
			title={Spectral and stochastic properties of the $f$-Laplacian, solutions
				of PDEs at infinity and geometric applications},
			journal={Rev. Mat. Iberoam.},
			volume={29},
			date={2013},
			number={2},
			pages={579--610},
			issn={0213-2230},
			review={\MR{3047429}},
			doi={10.4171/RMI/731},
		}
		
		\bib{BianchiniMariRigoli2009}{article}{
			author={Bianchini, Bruno},
			author={Mari, Luciano},
			author={Rigoli, Marco},
			title={Spectral radius, index estimates for Schr\"odinger operators and
				geometric applications},
			journal={J. Funct. Anal.},
			volume={256},
			date={2009},
			number={6},
			pages={1769--1820},
			issn={0022-1236},
			review={\MR{2498559}},
			doi={10.1016/j.jfa.2009.01.021},
		}
		
		\bib{Brooks1981}{article}{
			author={Brooks, Robert},
			title={A relation between growth and the spectrum of the Laplacian},
			journal={Math. Z.},
			volume={178},
			date={1981},
			number={4},
			pages={501--508},
			issn={0025-5874},
			review={\MR{638814}},
			doi={10.1007/BF01174771},
		}
		
		\bib{Brooks1984}{article}{
			author={Brooks, Robert},
			title={On the spectrum of noncompact manifolds with finite volume},
			journal={Math. Z.},
			volume={187},
			date={1984},
			number={3},
			pages={425--432},
			issn={0025-5874},
			review={\MR{757481}},
			doi={10.1007/BF01161957},
		}
		
		\bib{doCarmoZhou1999}{article}{
			author={do Carmo, Manfredo P.},
			author={Zhou, Detang},
			title={Eigenvalue estimate on complete noncompact Riemannian manifolds
				and applications},
			journal={Trans. Amer. Math. Soc.},
			volume={351},
			date={1999},
			number={4},
			pages={1391--1401},
			issn={0002-9947},
			review={\MR{1451597}},
			doi={10.1090/S0002-9947-99-02061-9},
		}
		
		\bib{Cheng1975}{article}{
			author={Cheng, Shiu Yuen},
			title={Eigenvalue comparison theorems and its geometric applications},
			journal={Math. Z.},
			volume={143},
			date={1975},
			number={3},
			pages={289--297},
			issn={0025-5874},
			review={\MR{0378001}},
		}
		
		\bib{ChengMejiaZhou2015}{article}{
			author={Cheng, Xu},
			author={Mejia, Tito},
			author={Zhou, Detang},
			title={Stability and compactness for complete $f$-minimal surfaces},
			journal={Trans. Amer. Math. Soc.},
			volume={367},
			date={2015},
			number={6},
			pages={4041--4059},
			issn={0002-9947},
			review={\MR{3324919}},
			doi={10.1090/S0002-9947-2015-06207-2},
		}
		
		\bib{ChengZhou2013}{article}{
			author={Cheng, Xu},
			author={Zhou, Detang},
			title={Volume estimate about shrinkers},
			journal={Proc. Amer. Math. Soc.},
			volume={141},
			date={2013},
			number={2},
			pages={687--696},
			issn={0002-9939},
			review={\MR{2996973}},
			doi={10.1090/S0002-9939-2012-11922-7},
		}
		
		\bib{ChengZhou2015}{article}{
			author={Cheng, Xu},
			author={Zhou, Detang},
			title={Stability properties and gap theorem for complete f-minimal
				hypersurfaces},
			journal={Bull. Braz. Math. Soc. (N.S.)},
			volume={46},
			date={2015},
			number={2},
			pages={251--274},
			issn={1678-7544},
			review={\MR{3448944}},
			doi={10.1007/s00574-015-0092-z},
		}
		
		\bib{Donnelly1981}{article}{
			author={Donnelly, Harold},
			title={On the essential spectrum of a complete Riemannian manifold},
			journal={Topology},
			volume={20},
			date={1981},
			number={1},
			pages={1--14},
			issn={0040-9383},
			review={\MR{592568}},
			doi={10.1016/0040-9383(81)90012-4},
		}
		
		\bib{Donnelly1997}{article}{
			author={Donnelly, Harold},
			title={Exhaustion functions and the spectrum of Riemannian manifolds},
			journal={Indiana Univ. Math. J.},
			volume={46},
			date={1997},
			number={2},
			pages={505--527},
			issn={0022-2518},
			review={\MR{1481601}},
			doi={10.1512/iumj.1997.46.1338},
		}
		
		\bib{DonnellyLi1979}{article}{
			author={Donnelly, Harold},
			author={Li, Peter},
			title={Pure point spectrum and negative curvature for noncompact
				manifolds},
			journal={Duke Math. J.},
			volume={46},
			date={1979},
			number={3},
			pages={497--503},
			issn={0012-7094},
			review={\MR{544241}},
		}
		
		\bib{Elbert2002}{article}{
			author={Elbert, Maria Fernanda},
			title={Constant positive 2-mean curvature hypersurfaces},
			journal={Illinois J. Math.},
			volume={46},
			date={2002},
			number={1},
			pages={247--267},
			issn={0019-2082},
			review={\MR{1936088}},
		}
		
		\bib{Escobar1986}{article}{
			author={Escobar, Jos{\'e} F.},
			title={On the spectrum of the Laplacian on complete Riemannian manifolds},
			journal={Comm. Partial Differential Equations},
			volume={11},
			date={1986},
			number={1},
			pages={63--85},
			issn={0360-5302},
			review={\MR{814547}},
			doi={10.1080/03605308608820418},
		}
		
		\bib{EscobarFreire1992}{article}{
			author={Escobar, Jos{\'e} F.},
			author={Freire, Alexandre},
			title={The spectrum of the Laplacian of manifolds of positive curvature},
			journal={Duke Math. J.},
			volume={65},
			date={1992},
			number={1},
			pages={1--21},
			issn={0012-7094},
			review={\MR{1148983}},
			doi={10.1215/S0012-7094-92-06501-X},
		}
		
		\bib{Fite1918}{article}{
			author={Fite, William Benjamin},
			title={Concerning the zeros of the solutions of certain differential
				equations},
			journal={Trans. Amer. Math. Soc.},
			volume={19},
			date={1918},
			number={4},
			pages={341--352},
			issn={0002-9947},
			review={\MR{1501107}},
			doi={10.2307/1988973},
		}
		
		\bib{HeinNaber2014}{article}{
			author={Hein, Hans-Joachim},
			author={Naber, Aaron},
			title={New logarithmic Sobolev inequalities and an $\epsilon$-regularity
				theorem for the Ricci flow},
			journal={Comm. Pure Appl. Math.},
			volume={67},
			date={2014},
			number={9},
			pages={1543--1561},
			issn={0010-3640},
			review={\MR{3245102}},
			doi={10.1002/cpa.21474},
		}
		
		\bib{Higuchi2001}{article}{
			author={Higuchi, Yusuke},
			title={A remark on exponential growth and the spectrum of the Laplacian},
			journal={Kodai Math. J.},
			volume={24},
			date={2001},
			number={1},
			pages={42--47},
			issn={0386-5991},
			review={\MR{1813717}},
			doi={10.2996/kmj/1106157294},
		}
		
		\bib{ImperaRimoldi2015}{article}{
			author={Impera, Debora},
			author={Rimoldi, Michele},
			title={Stability properties and topology at infinity of $f$-minimal
				hypersurfaces},
			journal={Geom. Dedicata},
			volume={178},
			date={2015},
			pages={21--47},
			issn={0046-5755},
			review={\MR{3397480}},
			doi={10.1007/s10711-014-9999-6},
		}
		
		\bib{LiWang2002}{article}{
			author={Li, Peter},
			author={Wang, Jiaping},
			title={Complete manifolds with positive spectrum. II},
			journal={J. Differential Geom.},
			volume={62},
			date={2002},
			number={1},
			pages={143--162},
			issn={0022-040X},
			review={\MR{1987380}},
		}
		
		\bib{McGR2015}{article}{
			author={McGonagle, Matthew},
			author={Ross, John},
			title={The hyperplane is the only stable, smooth solution to the
				isoperimetric problem in Gaussian space},
			journal={Geom. Dedicata},
			volume={178},
			date={2015},
			pages={277--296},
			issn={0046-5755},
			review={\MR{3397495}},
			doi={10.1007/s10711-015-0057-9},
		}
		
		\bib{MunteanuWang2014}{article}{
			author={Munteanu, Ovidiu},
			author={Wang, Jiaping},
			title={Geometry of manifolds with densities},
			journal={Adv. Math.},
			volume={259},
			date={2014},
			pages={269--305},
			issn={0001-8708},
			review={\MR{3197658}},
			doi={10.1016/j.aim.2014.03.023},
		}
		
		\bib{Pinsky1978}{article}{
			author={Pinsky, Mark A.},
			title={The spectrum of the Laplacian on a manifold of negative curvature.
				I},
			journal={J. Differential Geom.},
			volume={13},
			date={1978},
			number={1},
			pages={87--91},
			issn={0022-040X},
			review={\MR{520603}},
		}
		
		\bib{Pinsky1979}{article}{
			author={Pinsky, Mark A.},
			title={Spectrum of the Laplacian on a manifold of negative curvature. II},
			journal={J. Differential Geom.},
			volume={14},
			date={1979},
			number={4},
			pages={609--620 (1981)},
			issn={0022-040X},
			review={\MR{600617}},
		}
		
		
		\bib{Silvares2014}{article}{
			author={Silvares, Leonardo},
			title={On the essential spectrum of the Laplacian and the drifted
				Laplacian},
			journal={J. Funct. Anal.},
			volume={266},
			date={2014},
			number={6},
			pages={3906--3936},
			issn={0022-1236},
			review={\MR{3165248}},
			doi={10.1016/j.jfa.2013.12.014},
		}
		
		\bib{WeiWylie2009}{article}{
			author={Wei, Guofang},
			author={Wylie, Will},
			title={Comparison geometry for the Bakry-Emery Ricci tensor},
			journal={J. Differential Geom.},
			volume={83},
			date={2009},
			number={2},
			pages={377--405},
			issn={0022-040X},
			review={\MR{2577473}},
		}
		
		
		\bib{Zhou1994}{article}{
			author={Zhou, Detang},
			title={Essential spectrum of the Laplacian on manifolds of nonnegative
				curvature},
			journal={Internat. Math. Res. Notices},
			date={1994},
			number={5},
			pages={209 ff., approx.\ 6 pp.\ (electronic)},
			issn={1073-7928},
			review={\MR{1270134}},
			doi={10.1155/S1073792894000231},
		}
		
	\end{biblist}
\end{bibdiv}

%
%

\end{document}